\newtheorem{theorem}{Theorem}[section]
\newtheorem{lemma}[theorem]{Lemma}
\newtheorem{proposition}[theorem]{Proposition}
\theoremstyle{definition}
\newtheorem*{remark}{Remark}
\newtheorem*{claim}{Claim}
\newcommand{\fix}{\operatorname{fix}}
\newcommand{\Fix}{\operatorname{Fix}}
\newcommand{\Exc}{\operatorname{Exc}}
\newcommand{\Des}{\operatorname{Des}}
\newcommand{\Sym}{\operatorname{Sym}}
\title{Derangements with Ascending and Descending Blocks}
\author{Jacob Steinhardt}
\begin{document}

\maketitle

\begin{abstract}
We continue the work of Eriksen, Freij, and W\"{a}stlund \cite{EFW}, who study derangements that descend in blocks of prescribed lengths. We generalize their work to derangements that ascend in some blocks and descend in others. In particular, we obtain a generating function for the derangements that ascend in blocks of prescribed lengths, thus solving a problem posed in \cite{EFW}. We also work towards a combinatorial interpretation of a polynomial sum appearing in \cite{EFW}. As a result, we obtain a new combinatorial sum for counting derangements with ascending and descending blocks.
\end{abstract}

\section{Introduction}
\label{intro}

% we are studying derangements in terms of their descent set

We study derangements in terms of their descent set. A \emph{fixed point} is an index $i$ of a permutation $\pi$ such that $\pi(i) = i$. A \emph{derangement} is a permutation with no fixed points. A \emph{descent} of a permutation $\pi$ on $\{1,\ldots,n\}$ is an index $i$, $1 \leq i < n$, such that $\pi(i) > \pi(i+1)$. An \emph{ascent} is such an index with $\pi(i) < \pi(i+1)$.

% talk about other paper (\cite{Ste1}) that works on the general problem of studying permutations by cycle structure and descent set

In another paper, we consider the general problem of studying permutations by cycle structure and descent set \cite{Ste1}. Some of the results here are also proved in \cite{Ste1}\footnote{In particular, Theorems \ref{generating-function-thm} and \ref{enumeration-thm}, which count the $(A,S)$-derangements, are proved in \cite{Ste1}. Proposition \ref{polynomial-prop} and Theorem \ref{derangement-sum-thm}, which deal with a polynomial in \cite{EFW}, are new results.}.

% the methods in this paper deal more directly with the structure of derangements (arguments go by studying possible fixed points of a permutation with a given descent set)

However, the methods in this paper have the advantage of dealing directly with the structure of derangements. They work by studying the possible fixed points of a permutation with a given descent structure. In contrast, the results in \cite{Ste1} follow from a general bijection based on the work of Gessel, Reutenauer, and Reiner \cite{GR}, \cite{Rei1}.

% paper builds mostly off of work in \cite{EFW}

Our paper builds off of the work of Eriksen, Freij, and W\"{a}stlund in \cite{EFW}. They consider what they call \emph{$(a_1,\ldots,a_k)$-descending} derangements. These are derangements that descend in blocks of lengths $a_1,\ldots,a_k$. To be more precise, let $n = a_1+\cdots+a_k$ and partition $\{1,\ldots,n\}$ into consecutive blocks $A_1,\ldots,A_k$ such that $A_i$ has size $a_i$. Then an $(a_1,\ldots,a_k)$-descending permutation is a permutation that descends within each of the blocks $A_1,\ldots,A_k$. Another way of looking at this is to say that the ascent set is contained in $\{a_1,a_1+a_2,\cdots,a_1+\ldots+a_{k-1}\}$.

% they count (a_1,\ldots,a_k)-descending derangements, we count (A,S)-derangements, which they asked for

Eriksen et al.\ count the $(a_1,\ldots,a_k)$-descending derangements by finding a recursion for them, then using this to obtain a generating function and finally a sum based on the generating function. We consider the problem of counting the $(a_1,\ldots,a_k,S)$-derangements. We define an \emph{$(a_1,\ldots,a_k,S)$-permutation} as a permutation that descends in the blocks $A_i$ for $i \in S$ and ascends in all of the other blocks. Thus $(a_1,\ldots,a_k)$-descending derangements are the same as $(a_1,\ldots,a_k,\{1,\ldots,k\})$-derangements. For notational convenience we will usually let $A = (a_1,\ldots,a_k)$ and refer to $(A,S)$-derangements instead of $(a_1,\ldots,a_k,S)$-derangements when $a_1,\ldots,a_k$ are clear from context.

Like Eriksen et al., we obtain a recursion, generating function, and sum for the $(A,S)$-derangements. This solves a problem in \cite{EFW}, which asks for such an enumeration when $S = \emptyset$ (that is, Eriksen et al.\ ask for the number of $(a_1,\ldots,a_k)$-\emph{ascending} derangements). Our two results in this direction are 

\newtheorem*{hack1}{Theorem \ref{generating-function-thm}}
\begin{hack1}
The number of $(a_1,\ldots,a_k,S)$-derangements is the coefficient of $x_1^{a_1}\cdots x_k^{a_k}$ in

\[
\frac{1}{1-x_1-\cdots-x_k}\left(\frac{\prod_{i \not\in S} 1-x_i}{\prod_{i \in S} 1+x_i}\right).
\]
\end{hack1}

\newtheorem*{hack2}{Theorem \ref{enumeration-thm}}
\begin{hack2}
Let $l_i$ be $1$ if $i \not\in S$ and let $l_i$ be $a_i$ if $i \in S$. The number of $(a_1,\ldots,a_k,S)$-derangements is

\[
\sum_{0 \leq b_m \leq l_m, m=1,\ldots,k} (-1)^{\sum_{i=1}^k b_i} \binom{\sum_{i=1}^k (a_i-b_i)}{a_1-b_1,\ldots,a_k-b_k}.
\]
\end{hack2}

Setting $S$ to $\emptyset$ in Theorem \ref{generating-function-thm} yields Theorem 2.1 of \cite{EFW}. Similarly, Theorem \ref{enumeration-thm} is a generalization of the result in Section 3 of \cite{EFW}. The generating function for the $(a_1,\ldots,a_k)$-descending derangements first appears in the work of Han and Xin \cite{HX}, who use symmetric functions.

% we also work towards explaining a polynomial identity in \cite{EFW}; they prove that it's constant with calculus, we give a more combinatorial proof

We also work towards explaining a polynomial identity in \cite{EFW}. Let $f_{\lambda}(n)$ be the generating function for permutations on $\{1,\ldots,n\}$ by number of fixed points. In other words, the $\lambda^k$ coefficient of $f_{\lambda}(n)$ is the number of permutations in $S_n$ with $k$ fixed points. Eriksen et al.\ prove that the polynomial

\[
\frac{1}{a_1!\cdots a_k!} \sum_{T \subset \{1,\ldots,n\}} (-1)^{|T|} f_{\lambda}(|\{1,\ldots,n\} \backslash T|) \prod_{i=1}^k f_{\lambda}(|A_i \cap T|)
\]

\noindent is (i) constant and (ii) counts the $(a_1,\ldots,a_k)$-descending derangements when $\lambda = 1$. Eriksen et al.\ show that this polynomial is constant by taking a derivative. They then ask for a combinatorial proof that this polynomial always counts the $(a_1,\ldots,a_k)$-descending derangements. While we fall short of this goal, we obtain a more combinatorial proof that the polynomial is constant by using a sieve-like argument. We obtain the constant as a sum, which we then generalize to a sum that counts the $(A,S)$-derangements.

% talk about organization of rest of paper

In Section \ref{lemmas}, we give some structural lemmas about $(A,S)$-derangements and use them to derive a recursion for the number of $(A,S)$-derangements. In Section \ref{count}, we use the recursion of Section \ref{lemmas} to obtain a generating function and sum for the number of $(A,S)$-derangements. In Section \ref{polynomial}, we show that the polynomial from \cite{EFW} is constant and derive a new combinatorial sum for the $(a_1,\ldots,a_k)$-descending derangements. In Section \ref{sum}, we generalize the sum from Section \ref{polynomial} to count the $(A,S)$-derangements. In Section \ref{open-problems}, we present directions of future research.

\section{Structural lemmas and recursion}
\label{lemmas}

In this section, we will refer to an index $i$, $1 \leq i \leq n$, such that $\pi(i) < i$ as a \emph{deficiency}, and an index with $\pi(i) > i$ as an \emph{excedance}. We let $\Des(\pi)$ denote the descent set of $\pi$, $\Exc(\pi)$ the set of excedances, and $\Fix(\pi)$ the set of fixed points.

% start with lemma about removing fixed points in general

We begin by describing a process of ``fixed point removal'' defined in Sections 1 and 2 of \cite{EFW}. This process preserves descents, excedances, and fixed points (and so also ascents and deficiencies).

\begin{lemma}
\label{general-fixed-point-removal}
Given integers $i$ and $j$, $j \neq i$, define 

\[
\rho_i(j) = \left\{
	\begin{array}{lr}
		j   & \text{\emph{if} } j < i \\
		j-1 & \text{\emph{if} } j > i
	\end{array}
		\right.
\]

Given a set $S$ of integers, define $\rho_i(S)$ to be $\rho_i(S\backslash \{i\})$. For a permutation $\pi$ on $\{1,\ldots,n\}$ with $\pi(i) = i$, define the permutation $\psi_i(\pi)$ on $\{1,\ldots,n-1\}$ as $\psi_i(\pi) = \rho_i \pi \rho_i^{-1}$.

The map $\psi_i$ is a bijection from permutations on $\{1,\ldots,n\}$ with $\pi(i) = i$ to permutations on $\{1,\ldots,n-1\}$. Furthermore, $\Des(\psi_i(\pi)) = \rho_i(\Des(\pi))$, $\Exc(\psi_i(\pi)) = \rho_i(\Exc(\pi))$, and $\Fix(\psi_i(\pi)) = \rho_i(\Fix(\pi))$.
\end{lemma}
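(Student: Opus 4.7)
The plan is to verify each of the four assertions directly from the definition $\psi_i(\pi) = \rho_i \pi \rho_i^{-1}$. For the bijection, I would note that $\rho_i$ is an order-preserving bijection from $\{1,\ldots,n\}\setminus\{i\}$ to $\{1,\ldots,n-1\}$. Since $\pi(i)=i$ uses up the value $i$, the restriction of $\pi$ to $\{1,\ldots,n\}\setminus\{i\}$ is a permutation of its domain; conjugating this restriction by $\rho_i$ yields a permutation of $\{1,\ldots,n-1\}$. The inverse map is explicit: given $\sigma \in \Sym_{n-1}$, define $\pi$ by $\pi(i)=i$ and $\pi = \rho_i^{-1}\sigma\rho_i$ on $\{1,\ldots,n\}\setminus\{i\}$.

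For the preservation of fixed points and excedances, the key observation is that for any $j \neq i$ we also have $\pi(j) \neq i$, since $\pi(i)=i$ already uses up the value $i$. Hence both $j$ and $\pi(j)$ lie in the domain of $\rho_i$, and because $\rho_i$ is order-preserving there, $\pi(j)=j$ iff $\rho_i(\pi(j)) = \rho_i(j)$ iff $\psi_i(\pi)(\rho_i(j)) = \rho_i(j)$, giving $\Fix(\psi_i(\pi)) = \rho_i(\Fix(\pi))$. The same order-preservation argument, applied to the strict inequality $\pi(j) > j$, yields $\Exc(\psi_i(\pi)) = \rho_i(\Exc(\pi))$.

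The descent preservation requires a case analysis on an index $j' \in \{1,\ldots,n-2\}$ of a putative descent of $\sigma = \psi_i(\pi)$. When $j' < i-1$ or $j' \geq i$, the preimages $\rho_i^{-1}(j')$ and $\rho_i^{-1}(j'+1)$ are consecutive integers lying on the same side of $i$, so the descent condition $\sigma(j') > \sigma(j'+1)$ translates cleanly, via the order-preservation of $\rho_i$, into the descent condition $\pi(\rho_i^{-1}(j')) > \pi(\rho_i^{-1}(j'+1))$ at a consecutive pair of positions of $\pi$. In the bridge case $j' = i-1$, however, the preimages are $i-1$ and $i+1$, which straddle the fixed point $i$: here the descent of $\sigma$ at $j' = i-1$ records the relative order of $\pi(i-1)$ and $\pi(i+1)$, skipping over the value $\pi(i)=i$. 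One must use the fact that $\pi(i) = i$ to relate this comparison to the descent behavior of $\pi$ at positions $i-1$ and $i$, and thereby match the definition $\rho_i(\Des(\pi)) = \rho_i(\Des(\pi)\setminus\{i\})$.

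I expect the main obstacle to be the bridge case of the descent claim. The fixed-point and excedance preservations are essentially pointwise and reduce immediately to the order-preserving nature of $\rho_i$, but the descent at the bridge index $i-1$ of $\sigma$ is inherently a comparison between non-consecutive positions of $\pi$. Reconciling this with the set-theoretic action of $\rho_i$ on $\Des(\pi)$ requires a careful combinatorial case check on the possible orderings of $\pi(i-1)$, $i$, and $\pi(i+1)$, leveraging the fixed-point hypothesis in an essential way.
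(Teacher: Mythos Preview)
The paper omits the proof entirely (``routine verification''), so there is no argument to compare against. Your plan for the bijection, for $\Fix$, and for $\Exc$ is correct and is exactly the kind of order-preservation argument one would expect.

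Your instinct that the bridge case for $\Des$ is the real obstacle is well founded --- in fact it is fatal: the identity $\Des(\psi_i(\pi)) = \rho_i(\Des(\pi))$ is \emph{false} as literally stated, so no case check will rescue it. Take $n=4$, $i=2$, and $\pi = 3\,2\,4\,1$ (so $\pi(2)=2$). Then $\Des(\pi)=\{1,3\}$ and hence $\rho_2(\Des(\pi))=\{1,2\}$, whereas $\psi_2(\pi)=2\,3\,1$ has descent set $\{2\}$. The failure occurs precisely when the positions $i-1$ and $i$ have opposite descent/ascent status in $\pi$: in that situation the single bridge position $i-1$ of $\psi_i(\pi)$ cannot faithfully record both pieces of information.

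What \emph{is} true, and what the paper actually uses in Lemmas~\ref{descending-bijection} and~\ref{ascending-bijection}, is the weaker statement that removing a fixed point from inside a monotone block of an $(A,S)$-permutation yields an $(A',S)$-permutation with that block shortened by one. In those applications the fixed point $i$ sits inside a block that is entirely ascending or entirely descending, so positions $i-1$ and $i$ (when both lie in the block) automatically have the same status, and the bridge anomaly cannot arise; at a block boundary the relevant position is unconstrained anyway. So your write-up should either add the hypothesis that $i-1$ and $i$ are both descents or both ascents of $\pi$, or replace the $\Des$ clause by the statement about $(A,S)$-permutations that the paper actually needs.
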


The proof is a routine verification, so we omit it. The easiest way to think about this process is to think of permutations in terms of their permutation matrices, and then $\psi_i(\pi)$ is the permutation we get if we remove the $i$th row and $i$th column of $\pi$. We refer to the process of sending $\pi$ to $\psi_i(\pi)$ as ``removing the fixed point $i$ from $\pi$.''

% give lemma about structure of fixed points in a descending block (from \cite{EFW})

The next lemma appears implicitly in both \cite{HX} and \cite{EFW}.

\begin{lemma}
\label{descending-structure}
If $i \in S$, then any $(a_1,\ldots,a_k,S)$-permutation has at most one fixed point in the block $A_i$.
\end{lemma}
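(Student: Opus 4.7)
The plan is a short proof by contradiction using only the descending-block hypothesis.

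Suppose toward contradiction that some $(a_1,\ldots,a_k,S)$-permutation $\pi$ has two distinct fixed points $p < q$ in the same block $A_i$ with $i \in S$. The first step is to exploit the hypothesis $i \in S$: by definition this means $\pi$ descends throughout $A_i$, so $\pi(j) > \pi(j+1)$ for every index $j$ with both $j$ and $j+1$ in $A_i$.

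The second step is to observe that since $A_i$ is a \emph{consecutive} block of integers, every integer in the range $[p,q]$ also lies in $A_i$. We may therefore chain the descent inequalities from $j = p$ through $j = q-1$ to obtain
\[
\pi(p) > \pi(p+1) > \cdots > \pi(q),
\]
and in particular $\pi(p) > \pi(q)$. Substituting the fixed-point equations $\pi(p) = p$ and $\pi(q) = q$ yields $p > q$, contradicting $p < q$.

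I do not expect any obstacle here; the statement is essentially immediate from the definitions. The only point worth emphasizing in the write-up is that the consecutiveness of the block $A_i$ is what guarantees the entire chain of descent inequalities from $p$ to $q$ is available, which is why the argument does not work for blocks on which $\pi$ is permitted to ascend (i.e.\ when $i \notin S$).
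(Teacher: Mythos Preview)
Your proof is correct and uses essentially the same idea as the paper: the strict monotonicity of $\pi$ on a descending block $A_i$ forces at most one fixed point. The paper phrases it slightly differently---observing that a single fixed point $j \in A_i$ makes every earlier index of $A_i$ an excedance and every later one a deficiency---but this is the same monotonicity argument packaged without the explicit contradiction.
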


\begin{proof}
The permutation values are decreasing in $A_i$, so if $j \in A_i$ and $\pi(j) = j$, then all elements of $A_i$ coming before $j$ are excedances, and all elements of $A_i$ coming after $j$ are deficiencies.
\end{proof}

This implies the following bijection, which appears as Lemma 2.2 of \cite{EFW}. We include the proof for completeness.

\begin{lemma}
\label{descending-bijection}
If $i \in S$, then there is a bijection between $(a_1,\ldots,a_i,\ldots,a_k,S)$-permutations with one fixed point in $A_i$ and $(a_1,\ldots,a_{i}-1,\ldots,a_k,S)$-permutations with no fixed points in $A_i$.
\end{lemma}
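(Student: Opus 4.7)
The plan is to use the fixed-point removal $\psi_j$ from Lemma \ref{general-fixed-point-removal} applied to the unique (by Lemma \ref{descending-structure}) fixed point $j$ of $\pi$ in $A_i$. In the forward direction, set $\sigma := \psi_j(\pi)$. By Lemma \ref{general-fixed-point-removal} the descent, excedance, and fixed-point data transport via $\rho_j$, which merely relabels positions outside $A_i$; inside $A_i$, we are deleting a single element of a decreasing run, and the remaining values still form a decreasing run. Hence $\sigma$ is an $(a_1,\ldots,a_i-1,\ldots,a_k,S)$-permutation, and since by Lemma \ref{descending-structure} $j$ was the only fixed point in $A_i$, $\sigma$ has no fixed points in the shortened block $A_i' := \{b,\ldots,b+a_i-2\}$, where $b := a_1+\cdots+a_{i-1}+1$.

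For the inverse, suppose $\sigma$ is an $(a_1,\ldots,a_i-1,\ldots,a_k,S)$-permutation with no fixed points in $A_i'$, and write $v_l := \sigma(b+l-1)$ for $l = 1,\ldots,a_i-1$, so $v_1 > \cdots > v_{a_i-1}$. We need to select the unique $j \in A_i$ such that $\pi := \psi_j^{-1}(\sigma)$ is again an $(a_1,\ldots,a_k,S)$-permutation. Writing $j = b+c$ with $c \in \{0,\ldots,a_i-1\}$, one checks that the values of $\pi$ on $A_i$ are $v_1',\ldots,v_c',\,j,\,v_{c+1}',\ldots,v_{a_i-1}'$, where $v_l' := v_l + [v_l \geq j]$. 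The descent structure on every block other than $A_i$ is automatic from Lemma \ref{general-fixed-point-removal}, so the only requirement is that this sequence be decreasing, equivalently $v_c \geq j > v_{c+1}$ (with conventions $v_0 = +\infty$ and $v_{a_i} = -\infty$), equivalently $N(c) = c$ where $N(c) := |\{l : v_l \geq b+c\}|$.

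The main obstacle is proving that $N(c) = c$ has exactly one solution in $\{0,\ldots,a_i-1\}$. Set $g(c) := c - N(c)$; since $N(c+1) = N(c) - [b+c \in \{v_1,\ldots,v_{a_i-1}\}]$, we have $g(c+1) - g(c) \geq 1$, so $g$ is strictly increasing and $g(c) = 0$ has at most one solution. For existence, $g(0) \leq 0$ and $g(a_i-1) \geq 0$, so the only way $g$ could skip the value $0$ is to jump from $-1$ to $+1$ at some step, which would require both $N(c) = c+1$ and $b+c \in \{v_1,\ldots,v_{a_i-1}\}$. Since the top $c+1$ of the $v_l$'s are then precisely those $\geq b+c$, the $v_l$ equal to $b+c$ must be $v_{c+1}$, giving $\sigma(b+c) = b+c$ with $b+c \in A_i'$, contradicting the hypothesis. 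Uniqueness and existence, together with $\psi_j \circ \psi_j^{-1} = \mathrm{id}$ from Lemma \ref{general-fixed-point-removal}, show the two maps are mutually inverse.
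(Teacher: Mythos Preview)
Your proof is correct and follows essentially the same approach as the paper: remove the unique fixed point via $\psi_j$ in the forward direction, and in the backward direction insert a fixed point at the position where the block switches from excedances to deficiencies. The paper simply names that position as ``the first $j\in A_i$ with $\pi(j)<j$'' (or the end of the block if none exists) and leaves existence/uniqueness implicit from the structure in Lemma~\ref{descending-structure}; your formulation via $N(c)=c$ and the monotone function $g(c)=c-N(c)$ makes the same point more explicitly, and your use of the no-fixed-point hypothesis to rule out a jump of $g$ over $0$ is exactly the content the paper's description suppresses.
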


\begin{proof}
To get from a permutation with one fixed point in $A_i$ to one with no fixed points in $A_i$, just remove the fixed point as explained in Lemma \ref{general-fixed-point-removal}.

To go backwards, find the unique index $j \in A_i$ such that $\pi(j) < j$ but $\pi(k) > k$ for all $k \in A_i$ with $k < j$. Then insert a fixed point just before $j$ (by applying $\psi_j^{-1}$ to the permutation). In the case that $\pi(k) > k$ for all $k \in A_i$, insert a fixed point just after the end of the block $A_i$.
\end{proof}

% give lemma about structure of fixed points in an ascending block (new in this paper)

We will also need versions of Lemmas \ref{descending-structure} and \ref{descending-bijection} to deal with the case of ascending blocks (when $i \not\in S$).

\begin{lemma}
\label{ascending-structure}
Let $i \not\in S$, and let $\pi$ be an $(a_1,\ldots,a_k,S)$-permutation. Then all the fixed points in $A_i$ appear consecutively.
\end{lemma}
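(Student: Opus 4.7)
The plan is to prove that the fixed points in $A_i$ form an interval by showing that any index sandwiched between two fixed points of $A_i$ must itself be a fixed point. Concretely, I would take $j_1 < j_2$ to be two fixed points in $A_i$ and any index $j$ with $j_1 \leq j \leq j_2$, and argue that $\pi(j) = j$.

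The key tool is the monotonicity forced by $i \notin S$: the values $\pi(j_1), \pi(j_1 + 1), \ldots, \pi(j_2)$ are strictly increasing integers. Consequently, the values $\pi(j_1), \pi(j_1+1), \ldots, \pi(j)$ are $j - j_1 + 1$ distinct integers in increasing order, which gives
\[
\pi(j) \;\geq\; \pi(j_1) + (j - j_1) \;=\; j_1 + (j - j_1) \;=\; j.
\]
Applying the same observation to $\pi(j), \pi(j+1), \ldots, \pi(j_2)$ yields
\[
j_2 \;=\; \pi(j_2) \;\geq\; \pi(j) + (j_2 - j),
\]
so $\pi(j) \leq j$. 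Combining these two inequalities gives $\pi(j) = j$, as desired.

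Once the sandwiching claim is in hand, the conclusion is immediate: if $F$ denotes the set of fixed points in $A_i$ and $F$ is nonempty, then with $j_1 = \min F$ and $j_2 = \max F$, every integer in $[j_1, j_2]$ lies in $F$, so $F$ is a contiguous block of $A_i$. I do not expect any real obstacle here; the only subtlety is remembering that ``ascending'' in the paper's convention means strictly increasing on consecutive indices within $A_i$, which is what gives the crucial integer-spacing inequality $\pi(j) \geq \pi(j_1) + (j - j_1)$.
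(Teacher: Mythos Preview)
Your proof is correct and rests on the same key observation as the paper's: strict monotonicity of integers on $A_i$ forces $\pi(k)-\pi(j)\geq k-j$ whenever $j<k$ are both in $A_i$. The paper packages this slightly differently---it shows that excedances in $A_i$ are upward-closed and deficiencies are downward-closed, so the block decomposes as (deficiencies)(fixed points)(excedances)---but this is just the contrapositive viewpoint of your sandwiching argument, not a genuinely different idea.
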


\begin{proof}
If $j$ is an excedance, $j < k$, and $j,k \in A_i$, then $k$ is also an excedance. Similarly, if $j$ is a deficiency, $k < j$, and $j,k \in A_i$, then $k$ is also a deficiency.
\end{proof}

\begin{lemma}
\label{ascending-bijection}
If $i \not \in S$, then there is a bijection between $(a_1,\ldots,a_i,\ldots,a_k,S)$-permutations with exactly $p$ fixed points in $A_i$ and $(a_1,\ldots,a_i-l,\ldots,a_k,S)$-permutations with exactly $p-l$ fixed points in $A_i$. In particular, there is a bijection between $(a_1,\ldots,a_i,\ldots,a_k,S)$-permutations with exactly $l$ fixed points in $A_i$ and $(a_1,\ldots,a_i-l,a_k,S)$-permutations with exactly zero fixed points in $A_i$.
\end{lemma}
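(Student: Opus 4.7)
My plan is to iterate the fixed-point removal operation $\psi$ from Lemma \ref{general-fixed-point-removal}, mirroring the proof of Lemma \ref{descending-bijection} but adapted to the ascending case. By Lemma \ref{ascending-structure}, the $p$ fixed points in $A_i$ occupy a consecutive range $j, j+1, \ldots, j+p-1$ of indices; a short analysis using the ascending condition shows that the remaining indices of $A_i$ split into deficiencies (to the left of $j$) and excedances (to the right of $j+p-1$).

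The forward map iteratively removes the rightmost fixed point of $A_i$ by applying $\psi_{j+p-1}, \psi_{j+p-2}, \ldots, \psi_{j+p-l}$ in turn. Lemma \ref{general-fixed-point-removal} guarantees that each step preserves descents, excedances, and fixed points under the relabeling, so the result is an $(a_1,\ldots,a_i-l,\ldots,a_k,S)$-permutation with exactly $p-l$ fixed points in the shortened block $A_i$, now at the consecutive indices $j, \ldots, j+p-l-1$. The reverse map, given a target permutation $\pi'$ with $p-l$ fixed points in $A_i$, first identifies the canonical insertion position $k$: the smallest index $k \in A_i$ with $\pi'(k) > k$, or one past the right end of $A_i$ if no such $k$ exists. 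It then iteratively applies $\psi^{-1}$ to insert $l$ fixed points starting at position $k$. Because the ascending block structure forces deficiencies, then fixed points, then excedances, this insertion point lies immediately after any existing fixed points of $A_i$, so the resulting $p$ fixed points still form a consecutive range.

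The main obstacle is to verify that the inserted fixed points preserve the ascending structure of $A_i$ and that the two maps are mutually inverse. For the first, the new value $k$ placed at position $k$ is strictly greater than the value at any preceding index of $A_i$ (which is fixed or deficient, hence $\leq k-1$) and strictly less than the shifted value $\pi'(k)+1$ at the new position $k+1$ (which exceeds $k+1$ since $\pi'(k) > k$); positions further right in $A_i$ are all excedances, hence receive a uniform upward shift of $1$ that preserves their relative order. For the inverse identity, the forward map produces a permutation whose first excedance in $A_i$ lies precisely at the index $j+p-l$ immediately after the surviving fixed points, matching where the reverse map inserts, so the compositions recover the original permutations.
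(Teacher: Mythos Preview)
Your proof is correct and follows essentially the same approach as the paper: both exploit Lemma~\ref{ascending-structure} and the deficiency/fixed-point/excedance structure of an ascending block, and both build the bijection by iterating $\psi$ and $\psi^{-1}$. The only difference is cosmetic: the paper removes the \emph{leftmost} $l$ fixed points and, for the inverse, inserts just before the first index $j$ with $\pi(j)\geq j$ (i.e., before the existing fixed points), whereas you remove the \emph{rightmost} $l$ fixed points and insert just before the first strict excedance (i.e., after the existing fixed points). These are mirror images of the same construction.
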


Note that Lemma \ref{ascending-bijection} also holds if we replace all instances of ``exactly'' with ''at least.''

\begin{proof}
To get from a permutation with $p$ fixed points in $A_i$ to a permutation with $p-l$ fixed points in $A_i$, just remove the first $l$ fixed points.

To go backwards, find the unique index $j \in A_i$ such that $\pi(j) \geq j$ but $\pi(k) < k$ for all $k \in A_i$ with $k < j$. Then insert $l$ fixed points just before $j$ (by applying $\psi_j^{-1}$ to the permutation $l$ times). In the case that $\pi(k) < k$ for all $k \in A_i$, insert $l$ fixed points at the end of the block $A_i$.
\end{proof}

% give general recursion for (A,S)-derangements

Lemmas \ref{descending-bijection} and \ref{ascending-bijection} allow us to construct a recursion for the number of $(a_1,\ldots,a_k,S)$-derangements. In fact, now that we have Lemma \ref{ascending-bijection} in hand, the recursion follows by the same methods as in \cite{EFW}. For notational convenience, we will assume $S$ to be fixed throughout the argument. Then let $f_j(a_1,\ldots,a_k)$ denote the number of $(a_1,\ldots,a_k,S)$-permutations with no fixed points in blocks $A_i$ for $i \leq j$. In this case, $f_k(a_1,\ldots,a_k)$ is the number of $(a_1,\ldots,a_k,S)$-derangements.

\begin{proposition}
\label{derangement-recursion}
Let $m_i = 1$ if $i \in S$ and let $m_i = c_i$ if $i \not \in S$. Then, for all $0 \leq j < k$,

\begin{equation*}
%\label{recursion-eqn}
f_j(c_1,\ldots,c_k) = \sum_{h=0}^{m_{j+1}} f_{j+1}(c_1,\ldots,c_{j},c_{j+1}-h,c_{j+2},\ldots,c_k).
\end{equation*}

\end{proposition}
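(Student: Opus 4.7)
The plan is to stratify the permutations counted by $f_j(c_1,\ldots,c_k)$ by the number of fixed points they contain in block $A_{j+1}$, then invoke Lemmas \ref{descending-bijection} and \ref{ascending-bijection} to re-express each stratum in terms of $f_{j+1}$ applied to a reduced block-size vector.

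I would first split into cases by whether $j+1 \in S$. In the descending case $j+1 \in S$, Lemma \ref{descending-structure} forces each permutation to contain either $0$ or $1$ fixed points in $A_{j+1}$. The stratum with $0$ fixed points is exactly $f_{j+1}(c_1,\ldots,c_k)$ by definition, and the stratum with $1$ fixed point bijects onto $(c_1,\ldots,c_{j+1}-1,\ldots,c_k,S)$-permutations with no fixed point in $A_{j+1}$ via Lemma \ref{descending-bijection}. Summing matches the recursion, since $m_{j+1}=1$ in this case.

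In the ascending case $j+1 \notin S$, Lemma \ref{ascending-structure} allows any number $h \in \{0,1,\ldots,c_{j+1}\}$ of (necessarily consecutive) fixed points in $A_{j+1}$, and Lemma \ref{ascending-bijection} identifies the stratum with exactly $h$ fixed points in $A_{j+1}$ with the set of $(c_1,\ldots,c_{j+1}-h,\ldots,c_k,S)$-permutations having no fixed point in $A_{j+1}$. Summing over $h$ from $0$ to $c_{j+1} = m_{j+1}$ yields the right-hand side.

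The main point requiring care is verifying that the bijections preserve the condition of having no fixed points in the preceding blocks $A_1,\ldots,A_j$. This follows from the construction in Lemma \ref{general-fixed-point-removal}: each removal step is of the form $\psi_i$ with $i \in A_{j+1}$, and the re-indexing $\rho_i$ fixes every index smaller than $i$, so fixed points in $A_1,\ldots,A_j$ correspond bijectively to fixed points in the same blocks of the image. Hence the strata on each side match stratum-by-stratum, establishing the recursion.
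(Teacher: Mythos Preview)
Your proposal is correct and follows essentially the same approach as the paper: stratify $f_j$ by the number of fixed points in $A_{j+1}$, split into the two cases $j+1\in S$ and $j+1\notin S$, and invoke Lemmas~\ref{descending-structure}, \ref{descending-bijection}, and \ref{ascending-bijection} exactly as the paper does. Your final paragraph, checking via Lemma~\ref{general-fixed-point-removal} that the bijections preserve the no-fixed-point condition on the earlier blocks $A_1,\ldots,A_j$, is a worthwhile point that the paper leaves implicit.
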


\begin{proof}
The number of $(c_1,\ldots,c_k,S)$-permutations with no fixed points in blocks $A_i$ for $i \leq j$ is the sum, over all $h$, of the number of $(c_1,\ldots,c_k,S)$-permutations with no fixed points in blocks $A_i$ for $i \leq j$ and $h$ fixed points in $A_{j+1}$.

If $j+1 \in S$, then the number of $(c_1,\ldots,c_k,S)$-permutations with no fixed points in blocks $A_i$ for $i \leq j$ and $h$ fixed points in $A_{j+1}$ is equal to $0$ if $h > 1$. If $h \leq 1$, then by Lemma \ref{descending-bijection} the number of such permutations is equal to the number of $(c_1,\ldots,c_{j+1}-h,\ldots,c_k,S)$-permutations with no fixed points in blocks $A_i$ for $i \leq j+1$ . But the latter quantity is just $f_{j+1}(c_1,\ldots,c_{j+1}-h,\ldots,c_k)$, so in the case that $j+1 \in S$ we have

\[
f_j(c_1,\ldots,c_k) = \sum_{h=0}^{1} f_{j+1}(c_1,\ldots,c_{j+1}-h,\ldots,c_k),
\]

\noindent which agrees with Proposition \ref{derangement-recursion}.

If $j+1 \not\in S$, then the number of $(c_1,\ldots,c_k,S)$-permutations with no fixed points in blocks $A_i$ for $i \leq j$ and $h$ fixed points in $A_{j+1}$ is equal, by Lemma \ref{ascending-bijection}, to the number of $(c_1,\ldots,c_{j+1}-h,\ldots,c_k,S)$-permutations with no fixed points in blocks $A_i$ for $i \leq j+1$. This latter quantity is again just $f_{j+1}(c_1,\ldots,c_{j+1}-h,\ldots,c_k)$, so in the case that $j+1 \not\in S$ we have

\[
f_j(c_1,\ldots,c_k) = \sum_{h=0}^{c_{j+1}} f_{j+1}(c_1,\ldots,c_{j+1}-h,\ldots,c_k),
\]

\noindent which again agrees with Proposition \ref{derangement-recursion}. We have thus established Proposition \ref{derangement-recursion} in both the ascending and descending cases, so we are done.
\end{proof}

We will use Proposition \ref{derangement-recursion} in the next section to obtain a generating function for the number of $(a_1,\ldots,a_k,S)$-derangements.

\section{Counting with generating functions}
\label{count}

% use recursion from previous section to derive generating function

Throughout this section we will assume that $S$ and $k$ are fixed. Our first theorem gives a generating function for the $(a_1,\ldots,a_k,S)$-derangements.

\begin{theorem}
\label{generating-function-thm}
The number of $(a_1,\ldots,a_k,S)$-derangements is the coefficient of $x_1^{a_1}\cdots x_k^{a_k}$ in

\begin{equation*}
%\label{generating-function-eqn}
\frac{1}{1-x_1-\cdots-x_k}\left(\frac{\prod_{i \not\in S} 1-x_i}{\prod_{i \in S} 1+x_i}\right).
\end{equation*}

\end{theorem}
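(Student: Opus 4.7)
The plan is to introduce the multivariate generating function
\[
F_j(x_1,\ldots,x_k) \;=\; \sum_{c_1,\ldots,c_k \geq 0} f_j(c_1,\ldots,c_k)\, x_1^{c_1}\cdots x_k^{c_k},
\]
compute $F_0$ directly, and then use Proposition \ref{derangement-recursion} to pass from $F_j$ to $F_{j+1}$. Since $f_k(a_1,\ldots,a_k)$ is exactly the number of $(a_1,\ldots,a_k,S)$-derangements, the theorem will follow by identifying $F_k$ with the stated rational function.

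For the base case, I would observe that $f_0(c_1,\ldots,c_k)$ just counts all $(c_1,\ldots,c_k,S)$-permutations; since the order within each block is forced (decreasing for $i\in S$, increasing otherwise), such a permutation is determined by the partition of $\{1,\ldots,c_1+\cdots+c_k\}$ into the blocks. Hence $f_0(c_1,\ldots,c_k) = \binom{c_1+\cdots+c_k}{c_1,\ldots,c_k}$, and the standard identity for multinomial coefficients gives
\[
F_0(x_1,\ldots,x_k) \;=\; \frac{1}{1-x_1-\cdots-x_k}.
\]

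Next I would translate Proposition \ref{derangement-recursion} into two generating-function identities. If $j+1 \in S$ then $m_{j+1}=1$, and the recursion $f_j = f_{j+1}(\ldots,c_{j+1},\ldots) + f_{j+1}(\ldots,c_{j+1}-1,\ldots)$ immediately gives $F_j = (1+x_{j+1})F_{j+1}$, i.e.\ $F_{j+1} = F_j/(1+x_{j+1})$. If $j+1 \notin S$ then $m_{j+1}=c_{j+1}$, and the sum over $0 \leq h \leq c_{j+1}$ becomes, after reindexing $c_{j+1}' = c_{j+1}-h$ and summing,
\[
F_j \;=\; \sum_{h \geq 0} x_{j+1}^h\, F_{j+1} \;=\; \frac{F_{j+1}}{1-x_{j+1}},
\]
so $F_{j+1} = (1-x_{j+1})F_j$. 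Iterating these two rules for $j=0,1,\ldots,k-1$ multiplies $F_0$ by $1/(1+x_i)$ for each $i \in S$ and by $(1-x_i)$ for each $i \notin S$, yielding exactly the claimed expression for $F_k$.

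The only step requiring any care is the index shift for the ascending case, where the upper limit of summation depends on $c_{j+1}$; this is routine once one switches to the variable $c_{j+1}' = c_{j+1}-h$, and I do not anticipate a genuine obstacle. Everything else is formal manipulation of the power series once Proposition \ref{derangement-recursion} and the base case are in hand.
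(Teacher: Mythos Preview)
Your proposal is correct and follows essentially the same approach as the paper: define $F_j$, compute $F_0$ from the multinomial count of all $(A,S)$-permutations, and then use Proposition~\ref{derangement-recursion} to pass from $F_j$ to $F_{j+1}$ via the identities $F_j=(1+x_{j+1})F_{j+1}$ for $j+1\in S$ and $F_j=F_{j+1}/(1-x_{j+1})$ for $j+1\notin S$. If anything, you spell out the index shift in the ascending case a bit more explicitly than the paper does.
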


\begin{proof}
Let 

\[
F_j(x_1,\ldots,x_k) = \sum_{a_1,\ldots,a_k=0}^{\infty} f_j(a_1,\ldots,a_k) x_1^{a_1} \cdots x_k^{a_k}
\]

\noindent be the generating function for $f_j(a_1,\ldots,a_k)$. We will prove inductively that

\begin{equation}
\label{inductive-generating-function-eqn}
F_j(x_1,\ldots,x_k) = \frac{1}{1-x_1-\cdots-x_k}\left(\frac{\prod_{i \not\in S, i \leq j} 1-x_i}{\prod_{i \in S, i \leq j} 1+x_i}\right).
\end{equation}

From this, we will have 

\[
F_k(x_1,\ldots,x_k) = \frac{1}{1-x_1-\cdots-x_k}\left(\frac{\prod_{i \not\in S} 1-x_i}{\prod_{i \in S} 1+x_i}\right),
\]

\noindent which is what we are trying to show.

We start by establishing (\ref{inductive-generating-function-eqn}) in the case that $j = 0$. When $j = 0$, $f_j(a_1,\ldots,a_k)$ is just the number of $(a_1,\ldots,a_k,S)$-permutations (with no restrictions on fixed points). Thus $f_0(a_1,\ldots,a_k) = \binom{a_1+\cdots+a_k}{a_1,\ldots,a_k}$, since once we have distributed the numbers $1,\ldots,n$ among the blocks $A_1,\ldots,A_k$, there is a unique way to order them so that they ascend or descend as they are supposed to. So when $j = 0$ we have

\begin{eqnarray*}
F_0(x_1,\ldots,x_k) & = & \sum_{a_1,\ldots,a_k = 0}^{\infty} \binom{a_1+\cdots+a_k}{a_1,\ldots,a_k} x_1^{a_1}\cdots x_k^{a_k} \\
 & = & \sum_{n=0}^{\infty} \left(x_1+\cdots+x_k\right)^n \\
 & = & \frac{1}{1-x_1-\cdots-x_k}.
\end{eqnarray*}

This completes the base case for the induction. We now need to show that $F_{j+1} = (1-x_{j+1})F_j$ if $j+1 \not\in S$ and $F_{j+1} = \frac{1}{1+x_{j+1}}F_j$ if $j+1 \in S$. Equivalently, we need to show that $F_j = \frac{F_{j+1}}{1-x_{j+1}}$ if $j+1 \in S$ and $F_j = (1+x_{j+1})F_{j+1}$ if $j+1 \not\in S$. This follows directly from the recursive formula for $f_j$ in Proposition \ref{derangement-recursion}.
\end{proof}

% from the generating function, get an explicit enumeration similar to that from \cite{EFW}

Now that we have a generating function for the $(a_1,\ldots,a_k,S)$-derangements, we can easily express the number of $(a_1,\ldots,a_k,S)$-derangements as a sum.

\begin{theorem}
\label{enumeration-thm}
Let $l_i = 1$ if $i \not\in S$ and let $l_i = a_i$ if $i \in S$. The number of $(a_1,\ldots,a_k,S)$-derangements is

\begin{equation*}
%\label{enumeration-eqn}
\sum_{0 \leq b_m \leq l_m, m=1,\ldots,k} (-1)^{\sum_{i=1}^k b_i} \binom{\sum_{i=1}^k (a_i-b_i)}{a_1-b_1,\ldots,a_k-b_k}.
\end{equation*}

\end{theorem}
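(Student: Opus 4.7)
The plan is to start from the generating function of Theorem \ref{generating-function-thm} and extract the coefficient of $x_1^{a_1}\cdots x_k^{a_k}$ by expanding each of the three factors as a power series. The only real work is bookkeeping; the main thing to watch is the range of summation for the $b_i$.

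First I would expand the two numerator-type factors. For each $i \notin S$, the factor $(1-x_i)$ is already a polynomial, equal to $\sum_{b_i=0}^{1}(-1)^{b_i}x_i^{b_i}$, which explains the cap $l_i=1$. For each $i\in S$, I would write $\frac{1}{1+x_i} = \sum_{b_i=0}^{\infty}(-1)^{b_i}x_i^{b_i}$. Multiplying these together yields
\[
\frac{\prod_{i\notin S}(1-x_i)}{\prod_{i\in S}(1+x_i)} = \sum_{(b_1,\ldots,b_k)} (-1)^{\sum_i b_i}\, x_1^{b_1}\cdots x_k^{b_k},
\]
where $b_i\in\{0,1\}$ for $i\notin S$ and $b_i\ge 0$ for $i\in S$.

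Next I would handle the geometric factor $\frac{1}{1-x_1-\cdots-x_k}$. As in the base case of the induction inside the proof of Theorem \ref{generating-function-thm}, the multinomial theorem gives
\[
\frac{1}{1-x_1-\cdots-x_k} = \sum_{c_1,\ldots,c_k\ge 0}\binom{c_1+\cdots+c_k}{c_1,\ldots,c_k} x_1^{c_1}\cdots x_k^{c_k}.
\]
Convolving this with the previous expansion and reading off the $x_1^{a_1}\cdots x_k^{a_k}$ coefficient via $c_i = a_i - b_i$ produces
\[
\sum_{b_1,\ldots,b_k} (-1)^{\sum_i b_i}\binom{\sum_i(a_i-b_i)}{a_1-b_1,\ldots,a_k-b_k}.
\]

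The final step, which is the only place one could slip, is to justify the upper limit $b_i\le l_i$ given in the theorem. For $i\notin S$ the bound $b_i\le 1 = l_i$ is automatic from the expansion of $(1-x_i)$. For $i\in S$, the multinomial coefficient $\binom{\sum_i(a_i-b_i)}{a_1-b_1,\ldots,a_k-b_k}$ is understood to be zero whenever any entry $a_j-b_j$ is negative, so all terms with $b_i>a_i$ vanish and we may truncate the sum at $b_i\le a_i = l_i$. With that observation, the displayed sum matches the one in Theorem \ref{enumeration-thm} exactly, completing the proof.
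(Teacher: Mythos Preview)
Your argument is correct and follows exactly the approach of the paper: extract the coefficient of $x_1^{a_1}\cdots x_k^{a_k}$ from the generating function of Theorem~\ref{generating-function-thm} using the multinomial expansion of $\frac{1}{1-x_1-\cdots-x_k}$ established there. You have simply written out in full the bookkeeping that the paper leaves implicit in its one-line proof.
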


\begin{proof}
The result follows immediately from Theorem \ref{generating-function-thm} once we note that $\frac{1}{1-x_1-\cdots-x_k}$ is equal to

\[
\sum_{a_1,\ldots,a_k=0}^{\infty} \binom{a_1+\cdots+a_k}{a_1,\ldots,a_k} x_1^{a_1}\cdots x_k^{a_k},
\]

\noindent which was already shown in the course of the proof of Theorem \ref{generating-function-thm}.
\end{proof}

\begin{remark}
When $S = \emptyset$ (that is, in the case of $(a_1,\ldots,a_k)$-ascending permutations), we can also derive the sum in Theorem \ref{enumeration-thm} combinatorially. By Lemma \ref{ascending-bijection}, we can interpret the multinomial coefficient $\binom{\sum_{i=1}^k (a_i-b_i)}{a_1-b_1,\ldots,a_k-b_k}$ as the number of $(A,\emptyset)$-permutations with at least $b_i$ fixed points in block $i$. Then the sum in Theorem \ref{enumeration-thm} is an inclusion-exclusion sum that counts the number of $(A,\emptyset)$-permutations with no fixed points in any block, which is the definition of an $(a_1,\ldots,a_k)$-ascending derangement.
\end{remark}

\section{A polynomial sum}
\label{polynomial}

% write down the polynomial identity from \cite{EFW} and explain where it comes from

In this section we study a polynomial sum appearing in \cite{EFW}. The polynomial is

\begin{equation}
\label{EFW-polynomial}
\frac{1}{a_1!\cdots a_k!} \sum_{T \subset \{1,\ldots,n\}} (-1)^{|T|} f_{\lambda}(|\{1,\ldots,n\} \backslash T|) \prod_{i=1}^k f_{\lambda}(|A_i \cap T|).
\end{equation}

Surprisingly, this polynomial turns out to be constant. As a reminder, $f_{\lambda}(n)$ is the generating function for the elements of $S_n$ by the number of fixed points. Thus the first few values of $f_{\lambda}$ are

\begin{eqnarray*}
f_{\lambda}(0) & = & 1                                \\
f_{\lambda}(1) & = & \lambda                          \\
f_{\lambda}(2) & = & 1+\lambda^2                      \\
f_{\lambda}(3) & = & 2+3\lambda+\lambda^3             \\
f_{\lambda}(4) & = & 9+8\lambda+6\lambda^2+\lambda^4
\end{eqnarray*}

Eriksen et al.\ (Section 5 of \cite{EFW}) show that (\ref{EFW-polynomial}) counts the $(a_1,\ldots,a_k)$-descending derangements. They do this in two steps: they first show that (\ref{EFW-polynomial}) is equal to the number of $(a_1,\ldots,a_k)$-descending derangements when $\lambda = 1$, and then they show that (\ref{EFW-polynomial}) does not depend on $\lambda$ by differentiating with respect to $\lambda$. In this section, we show combinatorially that (\ref{EFW-polynomial}) is constant.

Call a cycle of a permutation $\pi$ \emph{small} if it lies entirely within one of the blocks $A_i$. Let $c(\pi)$ be equal to $0$ if $\pi$ contains any odd-length small cycles, and let $c(\pi)$ be equal to $2^m$ otherwise, where $m$ is the number of small cycles (which will in this case necessarily all have even length).

\begin{proposition}
\label{polynomial-prop}

\begin{equation}
\label{polynomial-eqn}
\frac{1}{a_1!\cdots a_k!} \sum_{T \subset \{1,\ldots,n\}} (-1)^{|T|} f_{\lambda}(|\{1,\ldots,n\} \backslash T|) \prod_{i=1}^k f_{\lambda}(|A_i \cap T|) = \frac{1}{a_1!\cdots a_k!} \sum_{\pi \in S_n} c(\pi).
\end{equation}

In particular, (\ref{EFW-polynomial}), which is also the left-hand side of (\ref{polynomial-eqn}), does not depend on $\lambda$, and the right-hand side of (\ref{polynomial-eqn}) is the number of $(a_1,\ldots,a_k)$-descending derangements.
\end{proposition}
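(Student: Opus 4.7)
The plan is to expand $f_\lambda$ combinatorially on the left-hand side of (\ref{polynomial-eqn}) and then switch the order of summation so that the sign $(-1)^{|T|}$ can be collapsed one permutation at a time. Using $f_\lambda(m) = \sum_{\sigma \in S_m} \lambda^{\fix(\sigma)}$, the left-hand side becomes a weighted sum over tuples $(T, \pi_0, \pi_1, \ldots, \pi_k)$, where $T \subset \{1, \ldots, n\}$, $\pi_0$ is a permutation of $\{1,\ldots,n\} \setminus T$, and each $\pi_i$ is a permutation of $A_i \cap T$, weighted by $(-1)^{|T|} \lambda^{\fix(\pi_0) + \sum_i \fix(\pi_i)}$.

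I would then repackage each such tuple as a pair $(\pi, T)$, where $\pi \in S_n$ is the disjoint union of $\pi_0, \pi_1, \ldots, \pi_k$. The key observation is that for this disjoint union to assemble into a well-defined permutation of $\{1,\ldots,n\}$, every cycle of $\pi$ meeting $T$ must lie entirely inside some $A_i$ --- that is, it must be a small cycle --- and conversely \emph{any} union of small cycles of $\pi$ is a valid choice of $T$. Swapping the order of summation and summing over $T$ for fixed $\pi$, the inner sum factors as $\prod_{j=1}^{s}(1 + (-1)^{\ell_j})$, where $\ell_1,\ldots,\ell_s$ are the lengths of the small cycles of $\pi$. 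This product vanishes if any $\ell_j$ is odd and equals $2^s$ otherwise, which is precisely $c(\pi)$. Moreover, if $c(\pi) \neq 0$, then $\pi$ has no odd-length small cycle and in particular no fixed point (a fixed point being a length-$1$ small cycle), so the factor $\lambda^{\fix(\pi)}$ is $1$ wherever $c(\pi)$ contributes. The $\lambda$-dependence therefore disappears, yielding (\ref{polynomial-eqn}) and establishing that the left-hand side is constant in $\lambda$.

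For the remaining assertion that the right-hand side counts the $(a_1,\ldots,a_k)$-descending derangements, I would invoke the result of Eriksen et al., who directly evaluate (\ref{EFW-polynomial}) at $\lambda = 1$; combined with the $\lambda$-independence just proved, this identifies $\tfrac{1}{a_1!\cdots a_k!}\sum_{\pi} c(\pi)$ with the descending-derangement count. The main subtlety is the bijective step: one must verify that $T$ must be a union of small cycles of $\pi$ (not an arbitrary subset) and that every such union is attained, because the sign-cancellation that kills the $\lambda$ rests entirely on that factorization of the inner sum. Once this combinatorial bookkeeping is in place, the rest is short.
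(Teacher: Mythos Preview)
Your proposal is correct and is essentially the same argument as the paper's. The paper packages the computation inside the group algebra $\mathbb{C}[S_n]$, defining $Q = \sum_T (-1)^{|T|} I(\Sym(\{1,\ldots,n\}\setminus T))\prod_i I(\Sym(A_i\cap T))$ and then applying the linear map $\operatorname{FIX}(\pi)=\lambda^{|\fix(\pi)|}$; but expanding that product in the group algebra is exactly your repackaging of $(T,\pi_0,\pi_1,\ldots,\pi_k)$ as $(\pi,T)$, and the paper's claim that the coefficient of $\pi$ in $Q$ equals $c(\pi)$ is proved by precisely your factorization $\prod_j(1+(-1)^{\ell_j})$ over small cycles. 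The final step---that $c(\pi)\neq 0$ forces $\fix(\pi)=0$, killing the $\lambda$---and the appeal to Eriksen et al.\ for the derangement count are also identical.
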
 

% prove that it evaluates to the desired sum

\begin{proof}
As noted above, Eriksen et al.\ have already shown that (\ref{EFW-polynomial}) counts the $(a_1,\ldots,a_k)$-descending derangements, so to prove Proposition \ref{polynomial-prop}, we only need to establish (\ref{polynomial-eqn}).

The $\frac{1}{a_1! \cdots a_k!}$ factor appears on both sides of (\ref{polynomial-eqn}), so we may ignore it and instead prove that

\begin{equation}
\label{polynomial-eqn-2}
\sum_{T \subset \{1,\ldots,n\}} (-1)^{|T|} f_{\lambda}(|\{1,\ldots,n\} \backslash T|) \prod_{i=1}^k f_{\lambda}(|A_i \cap T|) = \sum_{\pi \in S_n} c(\pi).
\end{equation}

We start by creating a multivariate version of (\ref{polynomial-eqn-2}). We will work in $\mathbb{C}[S_n]$, the group algebra of $S_n$. Define a function $I : 2^{S_n} \to \mathbb{C}[S_n]$ by

\[
I(T) = \sum_{\pi \in T} \pi
\]

\noindent for any $T \subset S_n$. Now we write down an element of $\mathbb{C}[S_n]$ that is similar to the sum on the left-hand-side of (\ref{polynomial-eqn-2}). Given a set $X$, let $\Sym(X)$ denote the symmetric group acting on $X$. Whenever $X \subset \{1,\ldots,n\}$, there is a natural embedding of $\Sym(X)$ in $S_n$. The desired element of $\mathbb{C}[S_n]$ is

\begin{equation}
\label{polynomial-eqn-sym}
Q = \sum_{T \subset \{1,\ldots,n\}} (-1)^{|T|} I(\Sym(\{1,\ldots,n\}\backslash T)) \cdot \prod_{i=1}^k I(\Sym(A_i \cap T)).
\end{equation}

The rest of the proof hinges on the following claim.

\begin{claim}
%\label{even-cycles-lemma}

\begin{equation}
\label{even-cycles-eqn}
Q = \sum_{\pi \in S_n} c(\pi) \pi
\end{equation}

\end{claim}

\begin{proof}[Proof of claim]
Fix a permutation $\pi$ and consider the terms of $Q$ in which $\pi$ appears. That is, consider for which values of $T$ the permutation $\pi$ lies in $G_T := \Sym(\{1,\ldots,n\} \backslash T) \times \prod_{i=1}^k \Sym(A_i \cap T)$. The permutation $\pi$ lies in $G_T$ if and only if each of its cycles lies in $\{1,\ldots,n\} \backslash T$ or in $T \cap A_i$ for some $i$. In other words, (i) for every cycle that is not small, $\{1,\ldots,n\} \backslash T$ must contain that cycle; (ii) for every small cycle $c$, the set $\{1,\ldots,n\} \backslash T$ must either contain $c$ or be disjoint from $c$. If there is any odd-length small cycle $c$ in $\pi$ then we can pair off terms where $c \subset T$ with terms where $c \cap T = \emptyset$, and $|T|$ will have different parity in both cases, so any permutation with an odd-length small cycle cancels out of $Q$.

If $\pi$ has no odd-length small cycles, then the preceding argument shows that $|T|$ will be even whenever $\pi \in G_T$ (because $T$ is a union of small cycles of $\pi$). Therefore, $\pi$ will always appear with the same (positive) sign, and $\pi$ appears $c(\pi)$ times in this case because every small cycle of $\pi$ can either lie in $T$ or not lie in $T$. Thus the coefficient of $\pi$ in $Q$ is indeed $c(\pi)$, and the claim follows.
\end{proof}

Now consider the vector space homomorphism $\operatorname{FIX} : \mathbb{C}[S_n] \to \mathbb{C}[\lambda]$ defined on elements of $S_n$ as

\[
\operatorname{FIX}(\pi) = \lambda^{|\fix(\pi)|}
\]

\noindent and extended by linearity to all of $\mathbb{C}[S_n]$. Note that $\operatorname{FIX}(Q)$ is equal to the left-hand-side of (\ref{polynomial-eqn-2}). On the other hand, by considering (\ref{even-cycles-eqn}), we see that $\operatorname{FIX}(Q)$ is equal to

\begin{equation}
\label{even-cycles-fix}
\sum_{\pi \in S_n} c(\pi) \operatorname{FIX}(\pi).
\end{equation}

However, every fixed point of $\pi$ is a small cycle of odd length. Therefore, if $\operatorname{FIX}(\pi) \neq 1$, then $c(\pi) = 0$. Hence (\ref{even-cycles-fix}) simplifies to

\[
\sum_{\pi \in S_n} c(\pi).
\]

This is exactly the right-hand-side of (\ref{polynomial-eqn-2}), so the left-hand-side and right-hand-side of (\ref{polynomial-eqn-2}) are equal, as we wanted to show.
\end{proof}

In the next section, we will prove directly that

\[
\sum_{\pi \in S_n} c(\pi)
\]

\noindent counts the $(a_1,\ldots,a_k)$-descending derangements and also generalize this formula to count the $(A,S)$-derangements.

\section{A combinatorial sum}
\label{sum}

We now derive a combinatorial sum for the $(A,S)$-derangements. Recall that an $(A,S)$-permutation is a permutation whose values descend in the blocks $A_i$ with $i \in S$ and ascend in all the other blocks. They are defined in more detail in Section \ref{intro}.

Let $c_S(\pi) = 0$ if $\pi$ has any odd-length small cycles or small cycles in ascending blocks. Otherwise, let $c_S(\pi) = 2^m$, where $m$ is the number of small cycles. The next theorem is our main result in this section.

\begin{theorem}
\label{derangement-sum-thm}
The number of $(A,S)$-derangements is equal to

\[
\frac{1}{a_1! \cdots a_k!} \sum_{\pi \in S_n} c_S(\pi).
\]
\end{theorem}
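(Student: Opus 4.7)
The plan is to compute the multivariate exponential generating function of $\sum_\pi c_S(\pi)$ using the exponential formula, and then recognize the result as the generating function from Theorem \ref{generating-function-thm}.

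The key observation is that $c_S$ factors multiplicatively across the cycles of $\pi$. Define a weight $w(c)$ on cycles by setting $w(c) = 2$ when $c$ is a small cycle of even length contained in a descending block $A_i$ (with $i \in S$), $w(c) = 1$ when $c$ is not a small cycle (i.e.\ not contained in any single $A_i$), and $w(c) = 0$ otherwise (odd-length small cycles, or any small cycle in an ascending block). Then $c_S(\pi) = \prod_c w(c)$, where the product runs over the cycles of $\pi$.

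Assign to each element $j \in \{1,\ldots,n\}$ a ``color'' equal to the index $i$ with $j \in A_i$, and track the number of elements of color $i$ with a variable $x_i$. The multivariate exponential formula for weighted permutations then gives
\begin{equation*}
\sum_{a_1,\ldots,a_k \geq 0} \frac{\sum_\pi c_S(\pi)}{a_1! \cdots a_k!} x_1^{a_1} \cdots x_k^{a_k} = \exp\bigl(C(x_1,\ldots,x_k)\bigr),
\end{equation*}
where $C$ is the exponential generating function for a single weighted cycle. A cycle of length $n$ lying entirely in color $i$ contributes $x_i^n/n$ to the (unweighted) cycle EGF, and an arbitrary colored cycle of length $n$ contributes $(x_1+\cdots+x_k)^n/n$ (from the $(n-1)!$ cyclic arrangements summed over color compositions), so
\begin{equation*}
C = \sum_{i \in S} \sum_{m \geq 1} \frac{2\,x_i^{2m}}{2m} + \sum_{n \geq 1} \frac{1}{n}\Bigl((x_1+\cdots+x_k)^n - \sum_{i=1}^k x_i^n\Bigr).
\end{equation*}
The first sum records even-length small cycles in descending blocks (each carrying weight $2$); the second records non-small cycles (all cycles minus all small cycles). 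Summing the logarithmic series gives $C = -\sum_{i \in S}\log(1-x_i^2) - \log(1-x_1-\cdots-x_k) + \sum_{i=1}^k \log(1-x_i)$.

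Exponentiating and using $1 - x_i^2 = (1-x_i)(1+x_i)$ yields
\begin{equation*}
\exp(C) = \frac{1}{1-x_1-\cdots-x_k} \cdot \frac{\prod_{i \notin S}(1-x_i)}{\prod_{i \in S}(1+x_i)},
\end{equation*}
which is precisely the generating function from Theorem \ref{generating-function-thm}. Equating coefficients of $x_1^{a_1}\cdots x_k^{a_k}$ on both sides then yields Theorem \ref{derangement-sum-thm}. The main care needed is in justifying the multivariate weighted exponential formula and the identification of the single-cycle contributions; once that bookkeeping is settled, the rest is a direct manipulation of logarithms and exponentials.
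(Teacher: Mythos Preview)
Your proof is correct and takes a genuinely different route from the paper's.

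The paper argues bijectively through the ornament machinery: it pushes the weighted sum $\sum_\pi c_S(\pi)\pi$ forward under the map $\Phi$ to colored necklaces, then under a further map $\Psi$ to augmented ``$(A,S)$-satisfactory'' ornaments, and finally collapses the augmentation using the class equation $\sum_{\lambda\vdash l} 1/N(\lambda)=1$. The count of satisfactory ornaments is then identified with the number of $(A,S)$-derangements via an external result (Corollary~2.6 of \cite{Ste1}). Your argument instead stays entirely on the generating-function side: you observe that $c_S$ is multiplicative over cycles, apply the multivariate exponential formula to compute the EGF of $\sum_\pi c_S(\pi)$ directly, and then recognize the answer as the generating function already established in Theorem~\ref{generating-function-thm}.

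Your approach is shorter, uses only standard machinery, and is internally self-contained (it appeals to Theorem~\ref{generating-function-thm} rather than to \cite{Ste1}). What the paper's approach buys is a more structural picture: it makes visible the necklace/ornament correspondence underlying both this theorem and Proposition~\ref{polynomial-prop}, and it gives an independent derivation that does not route through Theorem~\ref{generating-function-thm}. In effect, your exponential-formula computation is the generating-function shadow of the paper's ornament argument---the cycle EGF you write down is precisely what one gets by reading off the weights of colored necklaces.
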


\begin{figure}[b!]

\centering

\begin{tikzpicture}[shorten >=1pt,->]
	\tikzstyle{vw}=[circle,draw=black!100,fill=white,minimum size=17pt,inner sep=0pt]
	\tikzstyle{vb}=[circle,draw=black!100,fill=black!25,minimum size=17pt,inner sep=0pt]

% pentagon
	\foreach \name/\angle/\color/\text in {P-1/162/vw/1,
				P-2/234/vb/18, P-3/306/vb/16,	P-4/18/vw/8, P-5/90/vb/9}
		\node[\color,xshift=0cm,yshift=0.2cm] (\name) at (\angle:1cm) {$\text$};

	\foreach \from/\to in {1/2,2/3,3/4,4/5,5/1}
		\draw (P-\from) -- (P-\to);

% triangle
	\foreach \name/\angle/\color/\text in {T-1/90/vw/2,
				T-2/210/vb/17, T-3/330/vb/10}
		\node[\color,xshift=2.7cm,yshift=0cm] (\name) at (\angle:1cm) {$\text$};

	\foreach \from/\to in {1/2,2/3,3/1}
		\draw (T-\from) -- (T-\to);

% squares

	\foreach \name/\angle/\color/\text in {S1-1/135/vw/3,
				S1-2/225/vb/15, S1-3/315/vw/7, S1-4/45/vb/11}
		\node[\color,xshift=5.3cm,yshift=0.1cm] (\name) at (\angle:1cm) {$\text$};

	\foreach \name/\angle/\color/\text in {S2-1/135/vw/4,
				S2-2/225/vb/14, S2-3/315/vw/6, S2-4/45/vb/12}
		\node[\color,xshift=7.8cm,yshift=0.1cm] (\name) at (\angle:1cm) {$\text$};

	\foreach \from/\to in {1/2,2/3,3/4,4/1}{
		\draw (S1-\from) -- (S1-\to);
		\draw	(S2-\from) -- (S2-\to);
	}

% 2-cycle

	\foreach \name/\angle/\color/\text in {L-1/180/vw/5,L-2/0/vb/13}
		\node[\color,xshift=10.5cm,yshift=0cm] (\name) at (\angle:1cm) {$\text$};

	\draw [->] (L-1) to [bend right=37] (L-2);
	\draw [->] (L-2) to [bend right=37] (L-1);

\end{tikzpicture}

\caption{The $((8,10),\{1\})$-permutation $\pi = 18 \ 17 \ 15 \ 14 \ 13 \ 12 \ 11 \ 9 \ | \ 1 \ 2 \ 3 \ 4 \ 5 \ 6 \ 7 \ 8 \ 10 \ 16$ written as a product of disjoint cycles. The white vertices represent elements of $A_1$, and the grey vertices represent elements of $A_2$.}
\label{GR-bijection-example}

\end{figure}
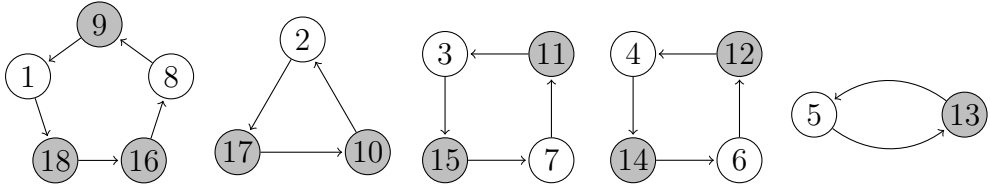

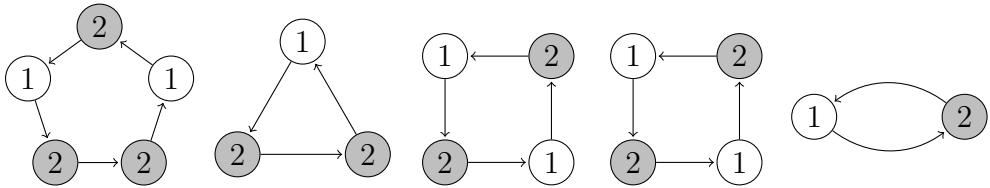
\begin{figure}[b!]

\centering

\begin{tikzpicture}[shorten >=1pt,->]
	\tikzstyle{vw}=[circle,draw=black!100,fill=white,minimum size=17pt,inner sep=0pt]
	\tikzstyle{vb}=[circle,draw=black!100,fill=black!25,minimum size=17pt,inner sep=0pt]

% pentagon
	\foreach \name/\angle/\color/\text in {P-1/162/vw/1,
				P-2/234/vb/2, P-3/306/vb/2,	P-4/18/vw/1, P-5/90/vb/2}
		\node[\color,xshift=0cm,yshift=0.2cm] (\name) at (\angle:1cm) {$\text$};

	\foreach \from/\to in {1/2,2/3,3/4,4/5,5/1}
		\draw (P-\from) -- (P-\to);
	
% triangle
	\foreach \name/\angle/\color/\text in {T-1/90/vw/1,
				T-2/210/vb/2, T-3/330/vb/2}
		\node[\color,xshift=2.7cm,yshift=0cm] (\name) at (\angle:1cm) {$\text$};

	\foreach \from/\to in {1/2,2/3,3/1}
		\draw (T-\from) -- (T-\to);

% squares

	\foreach \name/\angle/\color/\text in {S1-1/135/vw/1,
				S1-2/225/vb/2, S1-3/315/vw/1, S1-4/45/vb/2}
		\node[\color,xshift=5.3cm,yshift=0.1cm] (\name) at (\angle:1cm) {$\text$};

	\foreach \name/\angle/\color/\text in {S2-1/135/vw/1,
				S2-2/225/vb/2, S2-3/315/vw/1, S2-4/45/vb/2}
		\node[\color,xshift=7.8cm,yshift=0.1cm] (\name) at (\angle:1cm) {$\text$};

	\foreach \from/\to in {1/2,2/3,3/4,4/1}{
		\draw (S1-\from) -- (S1-\to);
		\draw	(S2-\from) -- (S2-\to);
	}

% 2-cycle

	\foreach \name/\angle/\color/\text in {L-1/180/vw/1,L-2/0/vb/2}
		\node[\color,xshift=10.5cm,yshift=0cm] (\name) at (\angle:1cm) {$\text$};

	\draw [->] (L-1) to [bend right=37] (L-2);
	\draw [->] (L-2) to [bend right=37] (L-1);

\end{tikzpicture}

\caption{The image of the permutation $\pi$ given in Figure \ref{GR-bijection-example} under the map $\Phi$. White vertices are labeled $1$ and grey vertices are labeled $2$. This ornament has $2^2 \cdot 2! = 8$ symmetries, since we can permute the two squares and also rotate each of them by any multiple of $180$ degrees.}
\label{GR-bijection-example2}

\end{figure}

We need to do some preliminary work before we can prove Theorem \ref{derangement-sum-thm}. We start with a map $\Phi$ from permutations to ornaments. The map $\Phi$ is illustrated in Figures \ref{GR-bijection-example} and \ref{GR-bijection-example2}. Formally, an ornament is a multiset of directed cycles (in the graph-theoretic sense) where every cycle is labeled by an integer in $\{1,\ldots,k\}$. We think of these labels as ``colors'' for the vertices. The map $\Phi$ takes a permutation $\pi$, writes it as a product of disjoint cycles, and replaces each element of each cycle by the block that it belongs to. The properties of this map are described in detail in \cite{Ste1}, although we will not need any special properties for the proof of Theorem \ref{derangement-sum-thm}. This map first appears in \cite{GR}. We will not use the terminology here, but we note that the cycles are usually referred to as \emph{necklaces}.

We call a cycle \emph{$r$-repeating} if it is equal to $r$ copies of its fundamental period. For example, the cycle $121212$ is $3$-repeating because it is equal to $3$ copies of its fundamental period $12$.

The map $\Phi$ is useful to us because of the following result, which appears in \cite{Ste1}.

\newtheorem*{hackGR}{Corollary $2.6$ of \cite{Ste1}}

\begin{hackGR}
%\label{GR-bijection}

The $(A,S)$-derangements are in bijection with the ornaments satisfying the following conditions:

\begin{itemize}

\item The number of vertices colored $i$ is equal to $a_i$.

\item Every cycle of every ornament is aperiodic ($1$-repeating), with the exception of monochromatic $2$-cycles.

\item There are no $1$-cycles.

\end{itemize}

\end{hackGR}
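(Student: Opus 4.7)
Since this statement is imported from the author's companion paper \cite{Ste1}, it is not proved in the present excerpt. The natural plan for a proof in the present framework would be to combine the structural lemmas of Section \ref{lemmas} with the Gessel-Reutenauer-Reiner correspondence between permutations and colored necklaces (\cite{GR}, \cite{Rei1}). Since $\Phi$ is many-to-one in general, the bijection must be established by showing that each ornament meeting the three listed conditions has exactly one $(A,S)$-derangement in its preimage under $\Phi$.

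First I would check that $\Phi$ does send an $(A,S)$-derangement to a valid ornament. The absence of $1$-cycles follows immediately because derangements have no fixed points, and the color counts are correct by construction. The aperiodicity condition is the substantive part: if some cycle of $\Phi(\pi)$ has fundamental color-period $p$ and repeats $r \geq 2$ times, I would examine how the $r$ occurrences of each color distribute themselves inside the corresponding block and apply Lemmas \ref{descending-structure} and \ref{ascending-structure} to rule out every configuration except a monochromatic $2$-cycle sitting inside a descending block.

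For the reverse direction, given a valid ornament $O$, the unique $(A,S)$-derangement in $\Phi^{-1}(O)$ would be produced by simultaneously labeling all vertices: for each color $i$ the $a_i$ vertices of that color must receive the $a_i$ labels of $A_i$, and the ascending or descending requirement on $A_i$ dictates the order in which those labels appear once a canonical cyclic traversal of each cycle is fixed. Choosing, say, the lex-smallest rotation of each aperiodic cycle---the intrinsic symmetry of a monochromatic $2$-cycle makes the choice irrelevant there---forces the full label assignment.

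The hard part is showing that the $k$ order constraints (one per block) are simultaneously consistent, since they are all linked through the same permutation $\pi$. A priori the canonical rotations are chosen cycle-by-cycle while the block constraints are read off globally, so a clash is conceivable. Aperiodicity is exactly what precludes this: it gives each non-exceptional cycle a distinguished rotation, pinning down the relative order of every color's labels unambiguously, and the monochromatic $2$-cycle exception is self-correcting because both rotations produce the same permutation. Completing this verification---that the constructed $\pi$ is indeed an $(A,S)$-derangement and that distinct valid ornaments yield distinct derangements---is precisely the content of the Gessel-Reutenauer-Reiner necklace bijection adapted to our setting, and this is where the full machinery of \cite{GR} and \cite{Rei1} is needed.
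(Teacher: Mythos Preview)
You correctly observe that the paper does not prove this statement; it is quoted verbatim as Corollary~2.6 of \cite{Ste1} and used as a black box. So there is nothing to compare your argument against in the present paper.

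That said, your sketch contains one misstep worth flagging. You propose to derive the aperiodicity condition from Lemmas~\ref{descending-structure} and~\ref{ascending-structure}, but those lemmas concern the location of \emph{fixed points} within a block---they say nothing about the periodicity of the colour sequence along a cycle of $\pi$. A cycle of $\pi$ whose image under $\Phi$ is $r$-repeating for $r\ge 2$ contains no fixed points at all, so the fixed-point lemmas give no purchase on it. The aperiodicity in the Gessel--Reutenauer correspondence instead comes from the unique factorisation of a word into a weakly decreasing product of Lyndon words (equivalently, primitive necklaces): once you write the one-line form of an $(A,S)$-permutation as a word over the block alphabet and apply that factorisation, primitivity of the factors is automatic, and the exceptional monochromatic $2$-cycles arise from the descent convention at block boundaries. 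Your reverse-direction outline---filling in labels using a canonical rotation of each aperiodic cycle---is the right idea, but again the consistency of the $k$ simultaneous order constraints is resolved by the Lyndon machinery of \cite{GR}, not by the structural lemmas of Section~\ref{lemmas}.
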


We will call an ornament satisfying the conditions of Corollary $2.6$ of \cite{Ste1} an \emph{$(A,S)$-satisfactory} ornament. In view of the statement of Theorem \ref{derangement-sum-thm}, we will also define an \emph{$(A,S)$-acceptable} permutation as a permutation with

\begin{itemize}

\item no small cycles from ascending blocks

\item only even-length small cycles from descending blocks

\end{itemize}

\noindent and define an \emph{$(A,S)$-acceptable} ornament as an ornament with

\begin{itemize}

\item no monochromatic cycles in ascending blocks

\item only even-length monochromatic cycles from descending blocks

\item exactly $a_i$ vertices colored $i$.

\end{itemize}

Thus the image of the $(A,S)$-acceptable permutations under $\Phi$ is the $(A,S)$-acceptable ornaments.

Finally, we define an augmentation of an ornament. Before defining an augmentation formally, we note Figure \ref{augmented-ornament-bijection-example}, which gives an example of an augmentation of an ornament that is a $5$-cycle, a $3$-cycle, and five $2$-cycles.

More formally, we can think of an ornament $\omega$ as a multiset $\{\nu_1^{l_1},\ldots,\nu_m^{l_m}\}$, were each $\nu_i$ is a cycle and $l_i$ is the number of times that $\nu_i$ appears in the ornament $\omega$. An \emph{augmentation of $\omega$} is the multiset $\omega$ together with an $m$-tuple $\lambda = (\lambda_1,\ldots,\lambda_m)$, where $\lambda_i$ is a partition of $l_i$. We usually denote this augmented ornament as $\omega_{\lambda}$, and we can more concisely represent $\omega_{\lambda}$ by $\{\nu_1^{\lambda_1},\ldots,\nu_m^{\lambda_m}\}$ since $l_i$ is determined by $\lambda_i$.

The final element we need to prove Theorem \ref{derangement-sum-thm} is a map $\Psi$ from $(A,S)$-acceptable ornaments to augmentations of $(A,S)$-satisfactory ornaments. We illustrate the map in Figure \ref{augmented-ornament-bijection-example} and describe it formally in Proposition \ref{augmented-ornament-prop}.

\begin{figure}[b!]

\centering

\begin{tikzpicture}[shorten >=1pt,->]
	\tikzstyle{vw}=[circle,draw=black!100,fill=white,minimum size=17pt,inner sep=0pt]
	\tikzstyle{vb}=[circle,draw=black!100,fill=black!25,minimum size=17pt,inner sep=0pt]

% pentagon
	\foreach \name/\angle/\color/\text in {P-1/162/vw/1,
				P-2/234/vb/2, P-3/306/vb/2,	P-4/18/vw/1, P-5/90/vb/2}
		\node[\color,xshift=0cm,yshift=0.2cm] (\name) at (\angle:1cm) {$\text$};

	\foreach \from/\to in {1/2,2/3,3/4,4/5,5/1}
		\draw (P-\from) -- (P-\to);
	
	\node[xshift = 0cm, yshift = -1.5cm] () at (0,0) {\LARGE{$(1)$}};
	
% triangle
	\foreach \name/\angle/\color/\text in {T-1/90/vw/1,
				T-2/210/vb/2, T-3/330/vb/2}
		\node[\color,xshift=3.2cm,yshift=0cm] (\name) at (\angle:1cm) {$\text$};

	\foreach \from/\to in {1/2,2/3,3/1}
		\draw (T-\from) -- (T-\to);

	\node[xshift = 3.2cm, yshift = -1.5cm] () at (0,0) {\LARGE{$(1)$}};
	
% 2-cycle

	\foreach \name/\angle/\color/\text in {L-1/180/vw/1,L-2/0/vb/2}
		\node[\color,xshift=6.5cm,yshift=0cm] (\name) at (\angle:1cm) {$\text$};

	\draw [->] (L-1) to [bend right=37] (L-2);
	\draw [->] (L-2) to [bend right=37] (L-1);

	\node[xshift = 6.5cm, yshift = -1.5cm] () at (0,0) {\LARGE{$(1,2,2)$}};
	
\end{tikzpicture}

\caption{An augmentation of an ornament (in particular, the image of the ornament in Figure \ref{GR-bijection-example2} under the map $\Psi$). We send the pentagon and triangle each to themselves together with the trivial partition $(1)$. We send the two $4$-cycles and the $2$-cycle to the $2$-cycle together with the partition $(1,2,2)$, since each of these cycles has the same fundamental period and the multiplicities of the periods in the $2$-cycle and the two squares are $1$, $2$, and $2$, respectively.}
\label{augmented-ornament-bijection-example}

\end{figure}
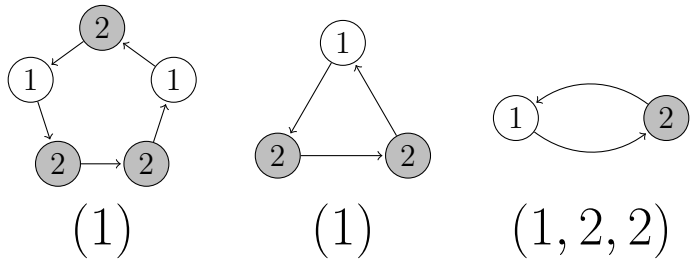

\begin{proposition}
\label{augmented-ornament-prop}
Define a map $\Psi$ that is given an $(A,S)$-acceptable ornament $\omega$ and outputs an augmentation of an $(A,S)$-satisfactory ornament. The map $\Psi$ takes each cycle $\nu$ in $\omega$ and replaces $\nu$ by $r$ copies of its fundamental period $\rho$, assuming that $\nu$ is $r$-repeating (that is, assuming that $\nu$ is composed of $r$ concatenated copies of $\rho$). If there are $n_r$ cycles that are $r$-repeating and map to $\rho$, then the partition associated with $\rho$ has $n_r$ blocks of size $r$.
\end{proposition}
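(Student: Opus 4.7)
The plan is to verify that the map $\Psi$ as described is well-defined, that is, that its output on any $(A,S)$-acceptable ornament $\omega$ really is an augmentation of an $(A,S)$-satisfactory ornament. Writing each cycle of $\omega$ as $\nu = \rho^r$ (meaning $r$ concatenated copies of an aperiodic $\rho$), the underlying ornament produced by $\Psi$ consists of one copy of each distinct fundamental period $\rho$ that arises, with multiplicity equal to the sum of the $r$-values of the source cycles having that $\rho$. I would check each of the three defining conditions of $(A,S)$-satisfactory ornaments in turn.

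The color-count condition is immediate: the cycle $\rho^r$ and the multiset of $r$ copies of $\rho$ have identical color content, so summing over all source cycles preserves each $a_i$. Aperiodicity of every cycle in the output ornament holds by the definition of fundamental period. The subtle condition is the prohibition of $1$-cycles, since the fundamental period of $\nu$ is a $1$-cycle exactly when $\nu$ is monochromatic. This is where the hypothesis that $\omega$ is $(A,S)$-acceptable is crucial: monochromatic cycles of $\omega$ must lie in descending blocks and have even length $2r$, so we adopt the convention of treating such a cycle as $r$ repetitions of the monochromatic $2$-cycle rather than $2r$ repetitions of a $1$-cycle. This places monochromatic $2$-cycles in the output ornament, which is precisely the exception permitted in the $(A,S)$-satisfactory definition, and avoids introducing any $1$-cycles.

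For the augmentation data, if cycles $\rho^{r_1}, \ldots, \rho^{r_n}$ in $\omega$ all share the fundamental period $\rho$, then $\rho$ has multiplicity $r_1 + \cdots + r_n$ in the output, and grouping equal $r$-values gives the prescribed partition with $n_r$ parts equal to $r$. Although the proposition as stated only defines $\Psi$, the bijectivity needed for Theorem \ref{derangement-sum-thm} is transparent from this description: the inverse expands, for each cycle $\rho$ in an $(A,S)$-satisfactory ornament with associated partition $(\lambda_1, \ldots, \lambda_n)$, into the cycles $\rho^{\lambda_1}, \ldots, \rho^{\lambda_n}$, producing an $(A,S)$-acceptable ornament. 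The main obstacle is the monochromatic-cycle case; once the convention above is fixed, all other verifications are routine bookkeeping on cycle multiplicities and color counts.
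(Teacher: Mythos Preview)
Your proposal is correct and aligns with the paper's own treatment, which is extremely terse: the paper simply states ``The bijectivity of $\Psi$ is immediate'' and gives no further argument. Your explicit handling of the monochromatic case---treating an even-length monochromatic cycle as repetitions of the monochromatic $2$-cycle rather than of a $1$-cycle---makes precise a convention that the paper leaves implicit but clearly intends (as confirmed by the later use of $f(\nu)=2$ for monochromatic $2$-cycles in the proof of Theorem~\ref{derangement-sum-thm}).
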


The bijectivity of $\Psi$ is immediate.\\

\par We are now ready to prove Theorem \ref{derangement-sum-thm}. Roughly, our strategy will be to take the $(A,S)$-acceptable permutations, map them to the $(A,S)$-acceptable ornaments with $\Phi$, map them to augmentations of $(A,S)$-satisfactory ornaments with $\Psi$, and then forget the augmentations to obtain $(A,S)$-satisfactory ornaments.

\begin{proof}[Proof of Theorem \ref{derangement-sum-thm}]
Recall that we are trying to show that

\[
\frac{1}{a_1!\cdots a_k!} \sum_{\pi \in S_n} c(\pi)
\]

\noindent counts the $(A,S)$-derangements. As before, we consider the element

\[
X = \frac{1}{a_1!\cdots a_k!} \sum_{\pi \in S_n} c(\pi)\pi
\]

\noindent of the group algebra $\mathbb{C}[S_n]$. The map $\Phi$ goes from $S_n$ to $\Omega_0$, the set of ornaments. We naturally extend $\Phi$ to a map from $\mathbb{C}[S_n]$ to $\mathbb{C}[\Omega_0]$. We will only apply $\Phi$ to the element $X$ of $\mathbb{C}[S_n]$; as $c(\pi) = 0$ whenever $\pi$ is not $(A,S)$-acceptable, we might as well regard $\Phi$ as mapping into $\mathbb{C}[\Omega]$, where $\Omega$ is the set of $(A,S)$-acceptable ornaments. Also, if $\Phi(\pi) = \Phi(\pi')$, then $c_S(\pi) = c_S(\pi')$, so we can regard $c_S$ as a function on ornaments by defining $c_S(\omega)$ to be $c_S(\Phi^{-1}(\omega))$ for any $(A,S)$-acceptable ornament $\omega$.

Finally, let $N(\omega)$ denote the group of symmetries of an ornament $\omega$. So if $\omega = \{\nu_1^{l_1},\ldots,\nu_m^{l_m}\}$, and $\nu_i$ is $r_i$-repeating, then the size of $N(\omega)$ is $r_1l_1! \cdots r_ml_m!$. In Figure \ref{GR-bijection-example2}, we compute the number of symmetries of an ornament.

\begin{claim}

\[
\Phi(X) = \sum_{\omega \in \Omega} \frac{c_S(\omega)}{|N(\omega)|} \omega.
\]
\end{claim}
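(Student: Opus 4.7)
The plan is to expand $\Phi(X)$ using linearity and then group terms according to the fibers of $\Phi$, writing
\[
\Phi(X) = \frac{1}{a_1!\cdots a_k!} \sum_{\omega \in \Omega} \left( \sum_{\pi \in \Phi^{-1}(\omega)} c_S(\pi) \right) \omega.
\]
The first observation I would make is that $c_S(\pi)$ depends only on the multiset of colored cycles of $\pi$, and hence only on $\Phi(\pi)$; in other words $c_S$ is constant on every fiber, and equal there to $c_S(\omega)$ as defined on ornaments in the preceding paragraph. So the inner sum collapses to $c_S(\omega)\cdot |\Phi^{-1}(\omega)|$, and the entire claim reduces to the single identity
\[
|\Phi^{-1}(\omega)| \;=\; \frac{a_1!\cdots a_k!}{|N(\omega)|}.
\]

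To prove this identity, I would interpret an element of $\Phi^{-1}(\omega)$ concretely. Fix the abstract shape of $\omega$ (its multiset of colored directed cycles), and consider all ways of replacing the color $i$ on each vertex by a distinct integer from the block $A_i$. There are exactly $a_1!\cdots a_k!$ such labelings, since within each color class we freely bijectively assign the $a_i$ labels in $A_i$ to the $a_i$ vertices colored $i$. Each labeling, read as a disjoint-cycle decomposition, produces a permutation $\pi \in S_n$ with $\Phi(\pi) = \omega$, and every pre-image of $\omega$ arises this way. Two labelings give rise to the same cycle decomposition precisely when one is obtained from the other by a symmetry of the ornament $\omega$ (a rotation within any cycle or a permutation of identical cycles), so the group $N(\omega)$ acts on the set of labelings with orbits equal to the fibers.

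The main step that needs a brief argument — and the one I expect to be the chief obstacle — is that this $N(\omega)$-action is \emph{free}. This is where the distinctness of the integer labels does the work: any symmetry fixing a labeling by distinct values of $\{1,\dots,n\}$ must fix each vertex setwise, hence act trivially on every cycle, hence be the identity of $N(\omega)$. Freeness then implies that every orbit has size exactly $|N(\omega)|$, so by orbit–stabilizer the number of distinct permutations produced is $a_1!\cdots a_k!/|N(\omega)|$, proving the identity. Substituting this back into the grouped expression for $\Phi(X)$ yields the coefficient $c_S(\omega)/|N(\omega)|$ on $\omega$, which is the claim.
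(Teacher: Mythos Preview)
Your proof is correct and follows essentially the same approach as the paper: count the $a_1!\cdots a_k!$ labelings of the vertices of $\omega$ and then quotient by the symmetry group $N(\omega)$ to obtain $|\Phi^{-1}(\omega)|=a_1!\cdots a_k!/|N(\omega)|$. If anything, you are slightly more careful than the paper, which simply asserts the overcount factor is $|N(\omega)|$ without explicitly noting that the action is free.
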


\begin{proof}[Proof of claim]
% insert proof here
Given any ornament $\omega$, there are $a_1!\cdots a_k!$ ways to fill in the vertices of $\omega$ with the integers $\{1,\ldots,n\}$ such that the vertices labeled $i$ are assigned distinct elements of $A_i$. However, there is some double-counting going on, as every symmetry of $\omega$ means that two different ways of filling in the vertices of $\omega$ will actually yield the same permutation in the end. Thus we overcount by a factor of $|N(\omega)|$, hence $\frac{a_1!\cdots a_k!}{|N(\omega)|}$ permutations map to a given ornament $\omega$ under the map $\Phi$. Since each of these permutations is assigned a weight $\frac{c_S(\omega)}{a_1!\cdots a_k!}$ in the sum for $X$, the claim follows.
\end{proof}

Now we define a map $\Upsilon$ by taking $\omega$ to $\omega'$, where $\omega'$ is the ornament that $\Psi(\omega)$ augments. In other words, we get $\Upsilon$ by taking $\Psi$ and then forgetting about the partitions and only worrying about the number of times each cycle occurs. $\Upsilon$ maps the $(A,S)$-acceptable ornaments to the set $\Sigma$ of $(A,S)$-satisfactory ornaments. We can thus extend $\Upsilon$ to a map from $\mathbb{C}[\Omega]$ to $\mathbb{C}[\Sigma]$.

If $\Psi(\omega) = \omega'_{\lambda}$, then $\omega$, and hence $|N(\omega)|$, is determined by $\lambda$ and $\omega'$. We will obtain a convenient expression for $|N(\omega)|$ in terms of $\omega'$ and $\lambda$. Suppose that $\omega'_{\lambda} = \{\nu_1^{\lambda_1},\ldots,\nu_m^{\lambda_m}\}$. Also let $f(\nu)$ be equal to the $r$ for which the cycle $\nu$ is $r$-repeating. For all cases we will consider, $f(\nu) = 2$ if $\nu$ is a monochromatic $2$-cycle and $f(\nu) = 1$ otherwise.

\begin{claim}
If $\lambda_i$ has $n_{ij}$ parts of size $j$ and $|\lambda_i|$ denotes the total number of parts of $\lambda_i$, then 

\begin{equation}
\label{stabilizer-formula}
|N(\omega)| = \prod_i \left( f(\nu_i)^{|\lambda_i|} \prod_j j^{n_{ij}} n_{ij}!\right) .
\end{equation}

\end{claim}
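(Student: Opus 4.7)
The plan is to compute $|N(\omega)|$ directly by reading off the cycle structure of $\omega$ from the augmented ornament $\omega'_{\lambda} = \{\nu_1^{\lambda_1},\ldots,\nu_m^{\lambda_m}\}$ and then applying the known product formula for the symmetry group of a multiset of ornamental cycles (i.e., $|N| = \prod r^l\, l!$ where the product ranges over distinct cycle types with repetition index $r$ and multiplicity $l$).

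First, by Proposition~\ref{augmented-ornament-prop}, each part of $\lambda_i$ of size $j$ contributes exactly one cycle of $\omega$, obtained by concatenating $j$ copies of $\nu_i$; call this cycle $\mu_{ij}$. Since $\lambda_i$ has $n_{ij}$ parts of size $j$, $\omega$ contains $n_{ij}$ identical copies of $\mu_{ij}$. Thus, as a multiset,
\[
\omega = \bigsqcup_{i,\, j:\, n_{ij} > 0} \{\mu_{ij}^{n_{ij}}\}.
\]

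Second, I would determine the repetition index of $\mu_{ij}$. Let $\rho_i$ denote the fundamental period of $\nu_i$, so that $\nu_i$ is a concatenation of $f(\nu_i)$ copies of $\rho_i$. Then $\mu_{ij}$ is a concatenation of $j \cdot f(\nu_i)$ copies of $\rho_i$, and I would argue that $\rho_i$ remains the fundamental period of $\mu_{ij}$. When $\nu_i$ is aperiodic ($f(\nu_i)=1$) this is immediate from $\rho_i = \nu_i$. When $\nu_i$ is a monochromatic $2$-cycle ($f(\nu_i)=2$), $\mu_{ij}$ is a monochromatic cycle of length $2j$, whose true fundamental period is indeed a single vertex, namely $\rho_i$. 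Either way $\mu_{ij}$ is $(j f(\nu_i))$-repeating.

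Third, plugging into the product formula for $|N(\omega)|$ gives
\[
|N(\omega)| = \prod_{i,\, j:\, n_{ij}>0} \bigl(j\, f(\nu_i)\bigr)^{n_{ij}} n_{ij}!,
\]
and splitting the base of the exponent, then using $\sum_j n_{ij} = |\lambda_i|$, yields
\[
|N(\omega)| = \prod_i \left( f(\nu_i)^{|\lambda_i|} \prod_j j^{n_{ij}} n_{ij}! \right),
\]
which is exactly (\ref{stabilizer-formula}).

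The only real obstacle is the second step: in the satisfactory-ornament encoding the monochromatic $2$-cycle $\nu_i$ plays the role of an "irreducible unit" even though it is itself $2$-repeating, so the naive identification of $\nu_i$ with a fundamental period is off by a factor of $f(\nu_i)$. Tracking this carefully is exactly why the factor $f(\nu_i)^{|\lambda_i|}$ appears in (\ref{stabilizer-formula}) — it bookkeeps the extra rotational symmetries coming from the hidden period of each monochromatic $2$-cycle across all $|\lambda_i|$ cycles derived from $\nu_i$.
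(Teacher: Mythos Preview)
Your proposal is correct and follows essentially the same route as the paper's own proof: identify the cycles of $\omega$ as the $n_{ij}$ copies of $\mu_{ij}$ (the $j$-fold concatenation of $\nu_i$), count $j f(\nu_i)$ rotational symmetries per such cycle and $n_{ij}!$ permutations among identical copies, and multiply over all $i,j$. If anything, you are slightly more explicit than the paper in justifying why the repetition index of $\mu_{ij}$ is $j f(\nu_i)$ (the paper simply asserts that each such cycle has $j f(\nu_i)$ internal symmetries), and you flag the one place where care is needed---the monochromatic $2$-cycle case---which the paper leaves implicit.
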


\begin{proof}[Proof of claim] Note that the symmetries of $\omega$ come from the internal symmetries of each cycle together with the symmetries between the cycles. In other words, every symmetry of $\omega$ permutes isomorphic cycles and also might rotate each cycle by a multiple of its period length. There are $n_{ij}$ cycles in $\omega$ that are equal to $j$ concatenated copies of $\nu_i$; each of these cycles has $j f(\nu_i)$ internal symmetries, and there are $n_{ij}!$ ways to permute these cycles among each other, so these cycles contribute a factor of $f(\nu_i)^{n_{ij}} j^{n_{ij}} n_{ij}!$. Multiplying this across all $i$ and $j$ yields (\ref{stabilizer-formula}). 
\end{proof}

In view of this, we will define $N(\lambda_i) = \prod_{j} j^{n_{ij}} n_{ij}!$ and define $N(\lambda) = \prod_i N(\lambda_i)$. Thus $\frac{|N(\omega)|}{c_S(\omega)} = N(\lambda)$. Also, let $\lambda \vdash l$ mean that $\lambda$ is a partition of $l$. We then see that

\[
\Upsilon(\Phi(X)) = \sum_{\omega' \in \Sigma} \omega' \sum_{\lambda} \frac{1}{N(\lambda)} = \sum_{\omega' = \{\nu_1^{l_1},\ldots,\nu_m^{l_m}\} \in \Sigma} \omega' \prod_{i=1}^m \sum_{\lambda_i \vdash l_i} \frac{1}{N(\lambda_i)}.
\]

Here the sum for $\lambda$ is over all augmentations $\omega'_{\lambda}$ of $\omega'$, and the sum for $\lambda_i$ is over all partitions $\lambda_i$ of $l_i$. Our final observation is that $N(\lambda_i)$ is the size of the stabilizer of the conjugacy class corresponding to $\lambda_i$ in $S_{l_i}$, hence $\sum_{\lambda_i \vdash l_i} \frac{1}{N(\lambda_i)} = 1$ by the class equation for $S_{l_i}$. Thus the above equation simplifies to

\[
\Upsilon(\Phi(X)) = \sum_{\omega' \in \Sigma} \omega'
\]

\noindent which implies that $\frac{1}{a_1!\cdots a_k!} \sum_{\pi \in S_n} c_S(\pi)$ is equal to $|\Sigma|$, which is the number of $(A,S)$-satisfactory ornaments, which by Corollary $2.6$ of \cite{Ste1} (stated earlier in this section) is the number of $(A,S)$-derangements, so we are done.
\end{proof}

\begin{remark}
We note that, if instead of considering $(A,S)$-satisfactory ornaments, we consider ornaments that have no $1$-cycles in ascending blocks and an even number of $1$-cycles in each descending block, the same argument as above works, with only some equations being different. This new set of ornaments is also in bijection with the $(A,S)$-derangements, since we can replace every pair of $1$-cycles from a descending block with a $2$-cycle from the same block. Doing this yields Theorem \ref{other-bijection}, whose proof we omit. Theorem \ref{other-bijection} is interesting because instead of a sum like $\sum_{\pi \in S_n} c_S(\pi)$, we end up with the cardinality of a set because all coefficients in the sum end up being either $0$ or $1$.
\end{remark}

\begin{theorem}
\label{other-bijection}
Let $D$ be the number of $(A,S)$-derangements, and let $E$ be the number of permutations such that

\begin{itemize}

\item There are no small cycles in ascending blocks.

\item The total length of all small cycles in each descending block is even.

\end{itemize}

Then $E = a_1!\cdots a_k! D$.

\end{theorem}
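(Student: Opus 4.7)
The plan is to parallel the proof of Theorem \ref{derangement-sum-thm}, replacing the role of $(A,S)$-satisfactory ornaments with a different set $\Sigma'$ motivated by the preceding remark. Define $\Sigma'$ to be the set of ornaments having $a_i$ vertices of color $i$ for each $i$, every non-monochromatic cycle aperiodic of length at least $2$, no monochromatic cycles in ascending blocks, and monochromatic $1$-cycles as the only monochromatic cycles in descending blocks, with an even number of such $1$-cycles in each descending block. Pairing up the $1$-cycles within each descending block into monochromatic $2$-cycles gives a bijection $\Sigma' \leftrightarrow \Sigma$, so $|\Sigma'|$ equals the number $D$ of $(A,S)$-derangements by Corollary $2.6$ of \cite{Ste1}.

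Let $\Omega'$ denote the set of ornaments with no monochromatic cycles in ascending blocks and with the total length of monochromatic cycles in each descending block even; these are precisely the images under $\Phi$ of the permutations counted by $E$. By the same counting argument used in the Claim inside the proof of Theorem \ref{derangement-sum-thm}, the map $\Phi$ sends exactly $\frac{a_1!\cdots a_k!}{|N(\omega)|}$ permutations to each $\omega \in \Omega'$, so
\[
E = a_1!\cdots a_k! \sum_{\omega \in \Omega'} \frac{1}{|N(\omega)|}.
\]
I would then adapt the map $\Psi$ of Proposition \ref{augmented-ornament-prop} to produce a bijection between $\Omega'$ and augmentations of ornaments in $\Sigma'$: a non-monochromatic cycle in $\omega$ is sent to copies of its aperiodic fundamental period as before, while a monochromatic cycle of length $l$ in a descending block is sent to $l$ copies of a monochromatic $1$-cycle of the same color. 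The evenness condition defining $\Omega'$ corresponds exactly to the evenness condition defining $\Sigma'$, since a single monochromatic cycle of length $l$ in $\omega$ contributes exactly $l$ monochromatic $1$-cycles to $\omega'$.

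Since every cycle $\nu_i$ in any $\omega' \in \Sigma'$ satisfies $f(\nu_i) = 1$ (as $\nu_i$ is either aperiodic of length $\geq 2$ or a $1$-cycle), formula (\ref{stabilizer-formula}) simplifies to $|N(\omega)| = \prod_i N(\lambda_i)$, and the $c_S(\omega)$ weight from the previous proof is absent. The computation from the end of the proof of Theorem \ref{derangement-sum-thm} then goes through verbatim: summing over augmentations and using the class equation $\sum_{\lambda_i \vdash l_i} 1/N(\lambda_i) = 1$ gives
\[
\sum_{\omega \in \Omega'} \frac{1}{|N(\omega)|} = |\Sigma'| = D,
\]
and hence $E = a_1!\cdots a_k!\, D$. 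The main subtlety is verifying that the adapted $\Psi$ lands in augmentations of $\Sigma'$ and is bijective; this rests on the observation about parity of total monochromatic length above, together with checking that if one defines $\rho$ to be a monochromatic $1$-cycle (rather than a monochromatic $2$-cycle) as the fundamental period of a monochromatic cycle of length $l$, then $\Psi$ has the desired target.
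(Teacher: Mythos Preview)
Your proposal is correct and follows essentially the same route sketched in the remark preceding Theorem \ref{other-bijection}: you replace $\Sigma$ by the set $\Sigma'$ of ornaments with all cycles aperiodic, no $1$-cycles in ascending colours, and an even number of $1$-cycles in each descending colour, observe that pairing $1$-cycles into monochromatic $2$-cycles gives a bijection $\Sigma'\leftrightarrow\Sigma$, and then rerun the $\Phi$-$\Psi$ machinery with the monochromatic $1$-cycle (rather than the monochromatic $2$-cycle) as the target period for small cycles, so that all $f(\nu_i)=1$ and the weight $c_S$ disappears. This is exactly what the paper outlines; your only addition is making explicit the verification that the adapted $\Psi$ is a bijection onto augmentations of $\Sigma'$ and that the parity condition transfers, which the paper leaves implicit.
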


\section{Open problems}
\label{open-problems}

% still need to do a better job of explaining the polynomial identity

We still need a better explanation of why (\ref{EFW-polynomial}) counts the $(a_1,\ldots,a_k)$-descending derangements. Our argument right now is unsatisfying because it involves two disjoint arguments (Proposition \ref{polynomial-prop} and Theorem \ref{derangement-sum-thm}) and therefore does not give a direct connection between (\ref{EFW-polynomial}) and the number of $(a_1,\ldots,a_k)$-descending derangements. It would therefore be nice to have an argument directly relating (\ref{EFW-polynomial}) to the $(a_1,\ldots,a_k)$-descending derangements for all values of $\lambda$.

It would also be nice to find a generalization of (\ref{EFW-polynomial}) that counts the $(A,S)$-derangements. This is particularly tempting because (\ref{EFW-polynomial}) reduces to a special case of the equation in Theorem \ref{derangement-sum-thm}, and this equation counts the $(A,S)$-derangements in general.

% can we get a recursion for derangements with exactly a given descent set?

Another question is whether we can obtain a recursion, similar to that for $f_j$, for the number of derangements with \emph{exactly} a given descent set. This is different from looking at $(A,S)$-permutations because with $(A,S)$-permutations there are certain points (between the blocks) when a permutation can either ascend or descend, and so the descent set is never specified completely. A starting point would be to find an elegant recursion for the \emph{permutations} with a given descent set. We can already count the permutations with a given descent set using inclusion-exclusion (see for example Theorem 1.4 of \cite{Bon}), but a recursive enumeration might be more flexible and thus allow us to incorporate the constraint that the permutations also be derangements more easily.

% find asymptotics

We could also ask for the asymptotic density of the $(A,S)$-derangements in the $(A,S)$-permutations. Is it, as in the case of all derangements, roughly $\frac{1}{e}$? In Section 7 of \cite{EFW}, Eriksen et al.\ show that being a derangement and being an $(a_1,\ldots,a_k)$-descending permutation are positively correlated events, but it is possible that they are not strongly correlated enough to affect the asymptotics.

There are a couple ways to get a notion of asymptotic density. We could first of all fix $S$ and demand that each of the block sizes gets large. In other words, we could ask if there exists a $\delta$ such that for any sequence of $k$-tuples of positive integers $(a_{1j},\ldots,a_{kj})$ such that $\min_{i=1}^k a_{ij}$ goes to infinity with $j$, there is a real number $\delta$ so that the density of $(a_{1j},\ldots,a_{kj},S)$-derangements in the $(a_{1j},\ldots,a_{kj},S)$-permutations approaches $\delta$. We could also fix $S$ and all of $a_1,\ldots,a_k$ and look at the $(ca_1,\ldots,ca_k,S)$-derangements for $c = 1,2,\ldots$, and then ask the same question. Even better would be to actually compute $\delta$.

% still need good polynomial-time enumeration of (A,S)-derangements

A final direction for further research is to find a polynomial-time algorithm to count the $(A,S)$-derangements. All current algorithms take time exponential in the number of blocks. One difficulty is that even a very efficient recursion will probably have $k$ variables and so even a dynamic programming approach will take exponential time.

\section{Acknowledgements}

% don't forget this!

This research was supervised by Joe Gallian at the University of Minnesota Duluth, supported by the National Science Foundation and the Department of Defense (grant number DMS 0754106) and the National Security Agency (grant number H98230-06-1-0013).

In addition to Joe Gallian, the author thanks Reid Barton, Ricky Liu, Aaron Pixton, Allan Steinhardt, and Phil Matchett Wood for help with the paper itself. He also thanks Geir Helleloid, Adam Hesterburg, Nathan Kaplan, Nathan Pflueger, and Yufei Zhao for helpful conversations.

\nocite{*}

\bibliographystyle{plain}
\bibliography{EFW}

\end{document}